\def\mystyle{}
\DeclareMathOperator{\dist}{dist}
\begin{document}

\title{Almost indiscernible sequences and convergence of canonical bases}

\author{Itaï Ben Yaacov}

\address{Itaï \textsc{Ben Yaacov} \\
  Université Claude Bernard -- Lyon 1 \\
  Institut Camille Jordan, CNRS UMR 5208 \\
  43 boulevard du 11 novembre 1918 \\
  69622 Villeurbanne Cedex \\
  France}
\urladdr{\url{http://math.univ-lyon1.fr/~begnac/}}

\author{Alexander Berenstein}
\address{Alexander Berenstein, Universidad de los Andes \\
  Cra 1 No 18A-10 \\
  Bogotá, Colombia}
\urladdr{\url{http://matematicas.uniandes.edu.co/~aberenst}}

\author{C. Ward Henson}
\address{C. Ward Henson \\
  University of Illinois at Urbana-Champaign \\
  Urbana, Illinois 61801 \\
  USA}
\urladdr{\url{http://www.math.uiuc.edu/~henson}}

\thanks{Research supported by CNRS-UIUC exchange programme, ANR chaire d'excellence junior THEMODMET (ANR-06-CEXC-007), and NSF grants DMS-0100979, DMS-0140677 and DMS-0555904.}

\svnInfo $Id: SFB.tex 1604 2013-08-06 15:58:43Z begnac $
\thanks{\textit{Revision} {\svnInfoRevision} \textit{of} \today}

\keywords{stable theory ; $\aleph_0$-categorical theory ; beautiful pair ; almost indiscernible sequence ; random variable ; almost exchangeable sequence}
\subjclass[2010]{03C45 ; 03C90 ; 60G09}

\begin{abstract}
  We give a model-theoretic account for several results regarding sequences of random variables appearing in Berkes \& Rosenthal \cite{Berkes-Rosenthal:AlmostExchangeableSequences}.
  In order to do this,
  \begin{itemize}
  \item We study and compare three notions of convergence of types in a stable theory: logic convergence, i.e., formula by formula, metric convergence (both already well studied) and convergence of canonical bases.
    In particular, we characterise $\aleph_0$-categorical stable theories in which the last two agree.
  \item We characterise sequences which admit almost indiscernible sub-sequences.
  \item We apply these tools to $ARV$, the theory (atomless) random variable spaces.
    We characterise types and notions of convergence of types as conditional distributions and weak/strong convergence thereof, and obtain, among other things, the Main Theorem of Berkes \& Rosenthal.
  \end{itemize}
\end{abstract}

\maketitle

\tableofcontents

\section*{Introduction}

The main motivation for the present paper is to give a formal model-theoretic account for several probability theory results of Berkes \& Rosenthal
\cite{Berkes-Rosenthal:AlmostExchangeableSequences}.
These results have a strong model theoretic flavour to them: for example, the use of limit tail algebras (canonical bases of limit types), reference of exchangeable sequences (indiscernible sequences), distribution realisation (type realisation), compactness of the distribution space (type space compactness), and so on.

The appropriate model theoretic setting for this analysis is the continuous logic theory $ARV$ of random variables over atomless probability spaces, which is exposed in some detail in \cite{BenYaacov:RandomVariables}.
In \autoref{cor:MomentQE} we show that, modulo $ARV$, every formula $\varphi(\bar x)$ can be expressed as a continuous combination of expectations of moments $\bE[\bar x^\alpha]$, so a reader not totally at ease with continuous logic may simply take this to be the definition of a formula.
Similarly, types in this theory correspond to conditional distributions, and each of the notions of convergence of conditional distributions considered by Berkes \& Rosenthal has a corresponding notion of convergence of types.
It is easy to check that \emph{weak convergence} of distributions corresponds to convergence in the logic topology (which is indeed the weakest natural topology on a type space).
We also show that \emph{strong convergence} of distributions corresponds to metric convergence of types, as well as to canonical base convergence which we define below.
Modulo these translations, the main theorem of \cite{Berkes-Rosenthal:AlmostExchangeableSequences} has a clear model theoretic counterpart, regarding existence of almost indiscernible sequences, which we prove (in a general model-theoretic setting) in \autoref{sec:AlmostIndiscernible}.

\medskip

In \autoref{sec:Convergence} we consider three topologies on the space of types of a stable theory:
\begin{enumerate}
\item The logic topology is the weakest topology we consider (since it is compact, it is minimal among Hausdorff topologies).
\item The canonical base topology is defined in terms of convergence of the canonical bases of the types.
  It is stronger than the logic topology, and over a model it is strictly stronger.
\item The metric topology is defined in terms of convergence of realisations of types.
  It is the strongest of the three.
\end{enumerate}
In \autoref{sec:SFB} we introduce SFB (strongly finitely based) theories, namely, theories for which the two last topologies agree.
In particular, we prove a useful criterion for SFB under the assumption of $\aleph_0$-categoricity.
\begin{thm*}
  A stable theory $T$ is $\aleph_0$-categorical and SFB if and only if the theory $T_P$ of lovely pairs of models of $T$ (as per Poizat \cite{Poizat:Paires}) is $\aleph_0$-categorical.
\end{thm*}
It follows easily that several familiar continuous theories, such as those of Hilbert spaces, probability algebras and random variable spaces, are SFB.

\autoref{sec:AlmostIndiscernible} is fairly independent from the preceding sections, building up to \autoref{thm:GenBR}, which is the general model-theoretic counterpart of Berkes \& Rosenthal \cite[Theorem~2.4]{Berkes-Rosenthal:AlmostExchangeableSequences}.

The theory $ARV$, of random variable spaces over atomless probability spaces, is discussed in two steps.
First, in \autoref{sec:RandomVariables} we discuss some general properties.
We characterise types as conditional distributions, and show that the logic topology agrees with weak convergence.
We also start proving that strong convergence of conditional distributions agrees with distance and canonical base convergence of types: we show that the former lies between the two latter ones; once we show, in \autoref{sec:SFB}, that $ARV$ is SFB, it follows that all three agree.
Second, in \autoref{sec:BR} we put everything together, showing that several of Berkes \& Rosenthal's results, including their main theorem, are special cases of model theoretic ones.

\medskip

Throughout this paper we assume that $T$ is a stable continuous theory.
We assume that the reader is familiar with basic facts regarding stability and continuous logic, as presented in \cite{BenYaacov-Usvyatsov:CFO}.
We diverge slightly from the conventions of these references, in that we do not distinguish between formulae and definable predicates, and refer to all as just ``formulae'' (one may consider that by ``formula'' here we mean a ``limit formula'' in the sense of \cite{BenYaacov-Usvyatsov:CFO}).

For material regarding the theory $ARV$ we refer the reader to \cite{BenYaacov:RandomVariables}.
Other background material includes Poizat \cite{Poizat:Paires} for beautiful pairs and Pillay \cite{Pillay:GeometricStability} (Chapter 2, Sections 4 and 5, specifically Theorem 5.12) for Zilber's Theorem and its consequences for $\aleph_0$-categorical strongly minimal and $\aleph_0$-stable theories.

\section{Convergence of types and canonical bases}
\label{sec:Convergence}

As said earlier, we work throughout in the context of a fixed theory $T$, which, when necessary, is assumed to be stable.
Since we shall be manipulating types throughout the paper, let us say a few words about them.
Let $X$ be an arbitrary set, let $\ell^\infty(X)$ denote the Banach space of bounded complex functions on $X$, and let $\cF \subseteq \ell^\infty(X)$.
Then we have a natural evaluation map $e\colon X \rightarrow \bC^\cF$, and $\overline{e(X)}$ is a compact Hausdorff space which can be naturally identified with the maximal ideal space of the sub-$C^*$-algebra generated by $\cF$ in $\ell^\infty(X)$.
Let us denote this space by $\beta_\cF(X)$.
In out setting, every $m$-ary formula $\varphi(\bar x)$ (i.e., formula with $m$ free variables $\bar x = (x_0,\ldots,x_{m-1})$) defines a bounded function on $M^m$ for each model $M \vDash T$, and we construct the space $m$-types in $T$ as
\begin{gather*}
  \tS_m(T) = \beta_\cF(M^m), \qquad \cF = \text{all $m$-ary formulae}.
\end{gather*}
This does not depend on the choice of $M$.
(We cheat a little -- this holds when $T$ is complete, otherwise we need to replace $M^m$ with a disjoint union of $m$-fold powers of models of all completions of $T$, and again, the choice of models is not important.)
The map $e$ will then be denoted $\tp$: $\bar a \in M^n$, its type is $\tp(\bar a) = e(\bar a) \in \tS_m(T)$.
When $p = \tp(\bar a)$ we also write $\bar a \vDash p$ and use the notation $\varphi(\bar x)^{p(\bar x)} = \varphi^p = \varphi(\bar x)$ for the evaluation map.

We shall also (or mostly) consider types over a parameter set, namely a subset $A \subseteq M$ in some model $M \vDash T$.
We then construct the space of $m$-types over $A$ as
\begin{gather*}
  \tS_m(A) = \beta_\cF(M^m), \qquad \cF = \text{all $m$-ary formulae with parameters in $A$}.
\end{gather*}
Again, this does not change if we replace $M$ with an elementary extension, and we write $\tp(\bar a/A) = e(\bar a)$.

When $T$ eliminates quantifiers we may replace ``formulae'' with ``quantifier-free formulae'' or even ``atomic formulae''.
Thus, for example, when $T = ARV$ (which eliminates quantifiers), elements of a model are $[0,1]$-valued random variables, and $\cF$ can be equivalently taken to be the family of $\bE[t(\bar x,\bar c)]$ where $t$ is a continuous function and $\bar c \in A^k$ for some $k$.
It is then not difficult to check that $\tS_m(A)$ can be identified with the space of $m$-dimensional joint conditional distributions with respect to $\sigma(A)$ equipped with the topology of weak convergence (we shall discuss all this in detail in \autoref{sec:RandomVariables}).

Also, when $T$ is a classical theory, i.e., when all atomic formulae are $\{0,1\}$-valued, we may restrict $\cF$ to classical, i.e., $\{0,1\}$-valued, formulae, without changing the end result, and we get the classical totally disconnected type spaces.

Given the parameters $A \subseteq M$, we can always replace $M$ with an elementary extension $N \succeq M$ such that $\tp(\cdot/A)\colon N^{2m} \rightarrow \tS_{2m}(A)$ is onto (all types over $A$ are realised in $N$).
We then define a distance on $\tS_m(A)$ by
\begin{gather*}
  d(p,q) = \min \, \bigl\{ d(\bar a,\bar b) \colon \bar a,\bar b \in N^m, \, \bar a \vDash p \text{ and } \bar b \vDash q \bigr\}.
\end{gather*}
The distance between two finite tuples is defined as the maximum of the distances between coordinates.
Since all formulae are uniformly continuous, this metric on $\tS_m(A)$ is stronger than the topology defined above, often called the \emph{logic topology}.

Going back to our two examples, in $ARV$ metric convergence agrees with strong convergence of joint conditional distributions, while in classical logic, the metric is discrete, and a convergent sequence must be eventually constant.

Our assumption that the theory $T$ is stable gives rise to yet another notion of convergence of types (to be more precise, this is
a notion of convergence of parallelism classes).
Recall from \cite{BenYaacov-Usvyatsov:CFO} that for every formula $\varphi(\bar x,\bar y)$ there exists a formula $d_{\bar x} \varphi(\bar y,Z)$, where $Z = (\bar z_n)_{n\in \bN}$ consists of countably many copies of $\bar x$, such that for every type $p(\bar x)$ over a model $M$ admits a $\varphi$-definition which is an instance $d_{\bar x} \varphi(\bar y,C)$:
\begin{gather*}
  \varphi(\bar x,\bar b)^p = d_{\bar x} \varphi(\bar b,C), \qquad \forall \bar b \in M^{|\bar y|}.
\end{gather*}
Moreover, if $N \succeq M$ is any elementary extension, $p$ admits a unique extension to a type over $N$ with the same definitions.

With a slight abuse of terminology, say that $A \subseteq M$ is \emph{algebraically closed} if $\acl^{eq}(A) = \dcl^{eq}(A)$.
This is an unavoidable technical condition which, in the cases of interest to us, will turn out to be quite benign: every set of random variables (in a model of $ARV$) is algebraically closed, and similarly every subset of a Hilbert space is algebraically closed.
When $A \subseteq M$ is algebraically closed, every $p(\bar x) \in \tS_m(A)$ admits a unique extension to a type over $M$ whose definitions are over $A$ (i.e., are equivalent to some formula with parameters in $A$, which need not be of the form $d_{\bar x} \varphi(\bar y,C)$), and we refer to these as being the definitions of $p$ (this canonical extension is called the \emph{non forking extension} of $p$ to $M$).
Even more generally, a type over an arbitrary set is \emph{stationary} if it has a unique extension to $\acl^{eq}(A)$ (and $A$ is algebraically closed if and only if all $m$-types over $A$, for all $m$, are stationary).

Let $S_{\Cb_\varphi}$ be the sort of canonical parameters of instances $d_{\bar x} \varphi(\bar y,Z)$.
The key property of this sort is that it is equipped with a natural metric: if $c$ and $c'$ are the canonical parameters of two instances $d_{\bar x}\varphi(\bar y,C)$ and $d_{\bar x}\varphi(\bar y,C')$, respectively, then
\begin{gather*}
  d(c,c') = \sup_{\bar y}\, \bigl| d_{\bar x}\varphi(\bar y,C)-d_{\bar x}\varphi(\bar y,C') \bigr|.
\end{gather*}
Now, for a type $p$ over an algebraically closed $A$ we define its \emph{$\varphi$-canonical base}, denoted $\Cb_\varphi(p)$, as the canonical parameter of the definition $d_{\bar x} \varphi(\bar y,C)$.
Thus, if $M \vDash T$ and $p(\bar x),q(\bar x) \in \tS_m(M)$, then:
\begin{align*}
  d\bigl( \Cb_\varphi(p),\Cb_\varphi(q) \bigr) & = \sup_{\bar b \in M}\, \bigl| \varphi(\bar x,\bar b)^p - \varphi(\bar x,\bar b)^q \bigr|.
\end{align*}

\begin{ntn}
  \label{ntn:Phi}
  For each $m$ we let $\Phi_m$ denote a set of formulae $\varphi(\bar x,\bar y)$, where $|\bar x| = m$, which is generating in the sense that every formula $\psi(\bar x,\bar y)$ (with $|\bar x| = m$) is a continuous combination of formulae in $\Phi_m$.
  Since we assume that the language is countable, we may take $\Phi_m$ to be countable.
\end{ntn}

The \emph{canonical base} of $p(\bar x) \in \tS_m(A)$ is defined as:
\begin{gather*}
  \Cb(p) = \bigl( \Cb_\varphi(p) \bigr)_{\varphi \in \Phi_m}.
\end{gather*}
The choice of $\Phi_m$ is of no importance, so long as it is generating as required in \autoref{ntn:Phi}: if $\psi$ is a continuous combination of $(\varphi_n) \subseteq \Phi_m$, then the $\psi$-definition of any $p$ can be recovered uniformly from the family of its $\varphi_n$-definitions.
We may therefore make the following convenient assumption:

\begin{conv}
  From now on we shall consider that $d_{\bar x} \varphi$ takes the canonical base as parameter: $\varphi(\bar x,\bar b)^p = d_{\bar x} \varphi(\bar b,C)$ where $C = \Cb(p)$ as above.
\end{conv}

The canonical base is usually viewed as a mere set (i.e., the minimal set to which $p$ has a non forking stationary restriction), but we will rather view it as an infinite tuple indexed by $\Phi_m$, living in the infinite sort $S_{\Cb_m} = \prod_{\varphi\in \Phi_m} S_{\Cb_\varphi}$ which only depends on $m$ (compare with \cite{BenYaacov:UniformCanonicalBases}).
Since we took $\Phi_m$ to be countable, the sort $S_{\Cb_m}$ consists of countable tuples.
As such, it is naturally equipped with a metric by enumerating $\Phi_m = \{\varphi_n\}_{n \in \bN}$ and letting
\begin{gather}
  \label{eq:CbMet}
  d\bigl( \Cb(p),\Cb(q) \bigr) = \bigvee_n 2^{-n} \wedge d\bigl( \Cb_{\varphi_n}(p),\Cb_{\varphi_n}(q) \bigr).
\end{gather}
Up to uniform equivalence, this does not depend on the chosen enumeration.
Now, convergence of canonical bases is pointwise convergence:
\begin{gather*}
  \Cb\bigl( p_n(\bar x) \bigr) \to \Cb\bigl( p(\bar x) \bigr)
  \quad \Longleftrightarrow \quad
  \Cb_\varphi(p_n) \to \Cb_\varphi(p) \text{ for all } \varphi \in \Phi_m.
\end{gather*}
We shall call this topology on $\tS_m(A)$ (where $A$ is algebraically closed) the \emph{canonical base topology}.

Types and type spaces of infinite tuples can be constructed in much the same manner.
Let $I$ be some index set, $\bar x = (x_i)_{i \in I}$.
Of course, only finitely many variables can actually appear in a formula, but we shall still call an $I$-ary formula one all of whose free variables appear in $\bar x$, and write it as $\varphi(\bar x)$ (the other variables are ``dummy''), and similarly for formulae with parameters in a set $A$.
This already gives us the logic topology on $\tS_I(A)$, and when $A$ is algebraically closed, the canonical base topology as well.

The metric topology on $\tS_I(A)$ when $I$ is infinite is a little trickier.
We observe that as a set, $\tS_I(A)$ can be naturally presented as the projective limit of $\bigl\{ \tS_{I_0}(A) : I_0 \subseteq I \text{ finite} \bigr\}$, and that for each of the logic or canonical base topologies, this is a topological inverse limit.
We therefore also define the distance topology on $\tS_I(A)$ as the inverse limit of the distance topologies on $\bigl\{ \tS_{I_0}(A) : I_0 \subseteq I \text{ finite} \bigr\}$.

\begin{rmk}
  \label{rmk:CountableMetricTopology}
  When $I$ is countable we can define a metric on $I$-tuples by identifying $I$ with $\bN$ and letting
  \begin{gather*}
    d(\bar a,\bar b) = \bigvee_{n \in \bN} 2^{-n} \wedge d(a_n,b_n).
  \end{gather*}
  This is a definable metric, and up to uniform equivalence does not depend on the enumeration of $I$, so the induced (product) uniform structure is canonical.
  Moreover, it induces the metric topology on $\tS_I(A)$ defined above.

  In addition, when $I$ is countable (or finite), we have $p_n \rightarrow p$ in the metric topology if and only if there are realisations $\bar a_n \vDash p_n$ and $\bar a \vDash p$ in an elementary extension of $M$ (the model containing $A$) such that $\bar a_n \rightarrow \bar a$ (coordinate-wise, or equivalently, in the metric on $N^I$).
\end{rmk}

\begin{ntn}
  The three topologies defined on $\tS_I(A)$ will be denoted $\sT_\cL$ (logic) $\sT_\Cb$ (canonical base) and $\sT_d$ (metric).
  For convergence of nets (or sequences) of types in these topologies we shall use the notation $p_j \rightarrow^\Box p$ or $p = \lim^\Box p_j$ where $\Box \in \{\cL, \Cb, d\}$.
  We allow ourselves to omit $\cL$ (the logic topology being ``the'' topology).
\end{ntn}

\begin{lem}
  \label{lem:TypeTopologies}
  For arbitrary theory $T$ and set $A$ we have $\sT_d \supseteq \sT_\cL$.
  When $T$ is stable and $A$ is algebraically closed we have $\sT_d \supseteq \sT_\Cb \supseteq \sT_\cL$.
\end{lem}
\begin{proof}
  It is enough to prove this when $I$ is finite, say $I = m$.
  Then the first assertion holds since all formulae are uniformly continuous.

  For the second assertion, since $I$ is finite, $\sT_d$ is metric, for $\sT_d \supseteq \sT_\Cb$ it is enough to show that for sequences, if $p_n \rightarrow^d p$ then $p_n \rightarrow^\Cb p$.
  Fix a formula $\varphi(\bar x,\bar y)$ with $|\bar x| = m$.
  For each $\varepsilon > 0$ there exists $\delta > 0$ such that for all $\bar a,\bar a',\bar b$, if $d(\bar a,\bar a') < \delta$ then $\bigl| \varphi(\bar a,\bar b) - \varphi(\bar a',\bar b) \bigr| < \varepsilon$, and for all $n$ big enough we have $d(p_n,p) < \delta$.
  For such $n$ we can choose realisations $\bar a \vDash p$, $\bar a' \vDash p_n$ such that $d(\bar a,\bar a') < \delta$, and moreover, we may choose them in such a manner that $\bar a\bar a' \ind_A M$.
  This just means that $\tp(\bar a/M)$ and $\tp(a'/M)$ are the non forking extensions of $p$ and $p_n$, respectively.
  It now follows that $d\bigl( \Cb_\varphi(p_n), \Cb_\varphi(p) \bigr) \leq \varepsilon$.
  Thus $\Cb_\varphi(p_n) \rightarrow \Cb_\varphi(p)$ for all such formulae $\varphi$, so indeed $p_n \rightarrow^\Cb p$.

  The canonical base topology is also metrisable (only the logic topology need not be, if $A$ is uncountable), so for $\sT_\Cb \supseteq \sT_\cL$ we may assume we have a sequence $p_n \rightarrow^\Cb p$.
  Then a formula over $A$ can be written as $\varphi(\bar x,\bar b)$, and we have
  \begin{gather*}
    \varphi(\bar x,\bar b)^{p_n}
    =
    d_{\bar x} \varphi(\bar b,\Cb_\varphi(p_n))
    \rightarrow
    d_{\bar x} \varphi(\bar b,\Cb_\varphi(p))
    =
    \varphi(\bar x,\bar b)^p.
  \end{gather*}
  Therefore $p_n \rightarrow p$ as desired.
\end{proof}

\begin{rmk}
  \label{rmk:CbConvergenceDistanceToModel}
  Let $M \vDash T$ be a model, and let $p_n \rightarrow^\Cb p$ in $\tS_m(M)$.
  Let $\varphi(\bar x,\bar y) = \bigvee_{i<m} d(x_i,y_i)$ be the distance formula, let $\psi = d_{\bar x} \varphi$, and let $c_n = \Cb_\varphi(p_n)$, $c = \Cb_\varphi(p)$.
  Let also $d(p,M)$ denote the distance from some (any) realisation of $p$ to $M^m$, and similarly for $p_n$.
  Then $d(p_n,M) = \inf_{\bar y} \, \psi(\bar y,c_n) \rightarrow \inf_{\bar y} \, \psi(\bar y,c) = d(p,M)$.
  In particular, a sequence of realised types can never converge in canonical base to a non realised type.

  On the other hand, the realised types over $M$ are dense in $\tS_m(M)$ in the logic topology.
  Therefore, if $M$ is non compact, so non realised types exist, we have a proper inclusion $\sT_\cL \subsetneq \sT_\Cb$.
\end{rmk}

\begin{exm}
  As per the previous Remark, examples of sequences which converge logically but not in $\Cb$ are plenty.
  Consider, for example, $\bN$ as a model of the theory of the infinite set (without extra structure).
  Let $p_n = \tp(n/\bN)$ and let $q \in \tS_1(\bN)$ be the unique non algebraic type.
  Then $p_n \rightarrow q$.
  Let $\varphi(x,y)$ be the formula $x = y$ and let $c_n$ be the canonical parameter for the $\varphi$-definition of $p_n$.
  Then the $c_n$ are all distinct (have distance one), so the sequence $(c_n)$ does not converge.
\end{exm}

A classical example where $\sT_\Cb$ differs from $\sT_d$ cannot be both $\aleph_0$-stable and $\aleph_0$-categorical (see \autoref{prp:ZilberEquivs}).
Since there is no know natural example of an $\aleph_0$-categorical, strictly stable classical theory (one can be produced using a Hrushovski construction), we shall give a non $\aleph_0$-categorical one, and a continuous one.

\begin{exm}
  Let $T = ACF_0$ be the (complete, $\aleph_0$-stable) theory of algebraically closed fields of characteristic zero.
  Let $K \vDash T$ be any model.
  For $n \in \bN$, the polynomial $X^n+Y$ is irreducible in $K[X,Y]$, and therefore gives rise to a complete type $p_n \in \tS_2(K)$.
  Similarly, let $p \in \tS_2(K)$ correspond to the trivial ideal.
  Then $p_n \rightarrow^\Cb p$.
  On the other hand, the distance on $\tS_2(K)$ is discrete, so $p_n \not\rightarrow^d p$.
\end{exm}

\begin{exm}
  \label{exm:LpNotSFB}
  Let $ALpL$ be the theory of atomless $L^p$ Banach lattices for $p \in [1,\infty)$ (see \cite{BenYaacov-Berenstein-Henson:LpBanachLattices}).
  Let $X = Y = Z = [0,1]$ with the Lebesgue measure, let $M = L^p(X) \subseteq L^p(X \times Y) \subseteq N = L^p(X \times Y \cup Z)$, where the first inclusion is induced by the projection $X \times Y \rightarrow X$, and the second by extension by zeroes.
  For each $n$ let $f_n = n^{1/p} \cdot \bone_{X \times [1-1/n,1]} \in N$, and let $p_n = \tp(f_n/M)$.
  Similarly, let $f = \bone_Z$, $p = \tp(f/M)$.
  First of all, it is clear that $p_n \not\rightarrow^d p$, and we claim that $p_n \rightarrow^\Cb p$.
  For this we shall use the characterisation of uniform canonical bases for $1$-types in $ALpL$ given in \cite[Section~3]{BenYaacov:UniformCanonicalBases}.

  For each $n$ and $t \in [0,1]$, let
  \begin{gather*}
    f_{n,t} =
    \begin{cases}
      0 & 0 \leq t \leq (n-1)/n \\
      n^{1/p} \cdot \bone_X & (n-1)/n < t \leq 1.
    \end{cases}
  \end{gather*}
  Then $f_{n,t} \in M$ increases with $t$, and $f_n$ is just $(x,y) \mapsto f_{n,y}(x)$ extended by zeroes to $X \times Y \cup Z$.
  In the notation of \cite{BenYaacov:UniformCanonicalBases} we have $\bE_{[t,s]}[f_n|M] = \int_t^s f_{n,r} \, dr$ for $0<t<s<1$.
  Similarly, $\bE_{[t,s]}[f|M] = 0$.
  In particular, $\bE_{[t,s]}[f_n|M]$ is zero for $n$ big enough, so $\bE_{[t,s]}[f_n|M] \rightarrow \bE_{[t,s]}[f|M]$ for all $0 < t < s <1$.
  In addition, $\|f^+\| = \|f_n^+\| = 1$, $\|f^-\| = \|f_n^-\| = 0$.
  By \cite[Theorem~3.16]{BenYaacov:UniformCanonicalBases}, $\Cb(f_n/M) \rightarrow \Cb(f/M)$, i.e., $p_n \rightarrow^\Cb p$.
\end{exm}

\section{The theory of $[0,1]$-valued random variables}
\label{sec:RandomVariables}

The main aim of this paper is to place results of Berkes \& Rosenthal \cite{Berkes-Rosenthal:AlmostExchangeableSequences} in a model-theoretic context.
One convenient way to code probability spaces as model-theoretic objects is via the corresponding spaces of $[0,1]$-valued random variables.
Let us recall a few facts from \cite[Section~2]{BenYaacov:RandomVariables} regarding such spaces.
Let $\Omega$ be a probability space, and $M = L^1(\Omega,[0,1])$ the space of all $[0,1]$-valued random variables, equipped with the $L^1$ distance.
Formally, we view $M$ as a metric structure $(M,0,\neg,\half,\dotminus)$ where the function symbols $\neg$, $\half$ and $\dotminus$ are interpreted naturally by composition.
We shall also use $E(X)$ as an abbreviation for $d(X,0)$, namely the expectation of $X$.
The class of all such structures is elementary, axiomatised by a universal theory $RV$.
The restriction to the operations $\neg$, $\half$ and $\dotminus$ is purely technical and may be ignored: by the lattice version of the Stone-Weierstraß Theorem, if $\theta\colon [0,1]^\alpha \rightarrow [0,1]$ is any continuous function then the map $\bar X \mapsto \theta(\bar X)$ is uniformly approximated by expressions in these symbols, and is therefore uniformly definable in all models of $RV$.

The probability algebra associated with $\Omega$ can be identified with the set of all characteristic functions in $L^1(\Omega,[0,1])$, and this set is uniformly quantifier-free definable in models of $RV$, and will be denoted by $\sF$.
For $A \subseteq M$, let $\sigma(A) \subseteq \sF^M$ denote the minimal complete sub-algebra with respect to which every $X \in A$ is measurable (so in particular $\sigma(M) = \sF^M$).

The theory $RV$ admits a model companion $ARV$, whose models are the spaces of the form $L^1(\Omega,[0,1])$ where $\Omega$ is atomless.
The theory $ARV$ is $\aleph_0$-categorical (whereby complete), $\aleph_0$-stable and it eliminates quantifiers.
Furthermore, non forking in models of $ARV$ coincides with probabilistic independence.
In other words, $A \ind_B C$ if and only if $\bP[ X | \sigma(BC) ] = \bP[ X | \sigma(B) ]$ for every $X \in \sigma(A)$ (or, equivalently, for every $X \in \sigma(AB)$).
In terms of definability of types: $B$ and $BC$ are always algebraically closed, and $\tp(A/BC)$ is definable with parameters in $B$ (equivalently, its definitions agree with those of $\tp(A/B)$) if and only if $\bP[ X | \sigma(BC) ] = \bP[ X | \sigma(B) ]$ for every $X \in \sigma(A)$.

The theories $RV$ and $Pr$ (the theory of probability algebras) are biïnterpretable.
Indeed we have already mentioned that the probability algebra is definable in the corresponding random variable space.
Conversely, using a somewhat more involved argument, one can interpret, in a probability algebra $\sF$, the space of random variables $L^1(\sF,[0,1])$, such that for $M \vDash Pr$ and $N \vDash RV$:
\begin{gather*}
  M = \sF^{L^1(M,[0,1])},
  \qquad
  N = L^1\bigl( \sF^N  ,[0,1] \bigr).
\end{gather*}

\begin{dfn}
  Let $\sA$ be a probability algebra.
  An \emph{$n$-dimensional distribution} over $\sA$ is an $L^1(\sA,[0,1])$-valued Borel probability measure $\vec \mu$ on $\bR^n$ ($\sigma$-additive in the $L^1$ topology, and $\vec \mu(\bR^n)$ is the constant function $1 \in L^1(\sA,[0,1])$).
  The space of all $n$-dimensional distributions over $\sA$ will be denoted $\fD_{\bR^n}(\sA)$.
  For a Borel set $B \subseteq \bR^n$, we denote by $\fD_B(\sA)$ the space of $n$-dimensional conditional distributions which, as measures, are supported by $B$ (we shall only use this notation for $B = [0,1]^n$).

  Let $\bar X$ be an $n$-tuple of real-valued random variables.
  The joint conditional distribution of $\bar X$ over $\sA$ denoted here by $\vec \mu = \dist(\bar X|\sA)$ (and by $c\cdot(\sA)\dist(\bar X)$ in \cite{Berkes-Rosenthal:AlmostExchangeableSequences}) is the $n$-dimensional distribution over $\sA$ given by
  \begin{gather*}
    \vec \mu(B) = \bP[\bar X \in B| \sA],
    \qquad
    B \subseteq \bR^n \text{ Borel}.
  \end{gather*}
\end{dfn}

Recall that a net $(X_i)_{i \in I} \subseteq L^1(\sA,[0,1])$ converges in the \emph{weak topology} to $X$ if for every $Y \in L^1(\sA,[0,1])$, $E[X_iY] \to E[XY]$.
The net $(X_i)$ converges to $X$ in the \emph{strong topology} if it converges in $L^1$.

\begin{dfn}
  Following \cite[Proposition~1.8]{Berkes-Rosenthal:AlmostExchangeableSequences}, say that a net $(\vec \mu_i)_{i \in I}$ of $n$-dimensional distributions over $\sA$ \emph{converges weakly (strongly)} to $\vec \mu$ if for every continuous function $\theta\colon \bR^n \to [0,1]$ we have $\int \theta(\bar x)\,d \vec \mu_i(\bar x)
  \to \int \theta(\bar x)\,d \vec \mu(\bar x)$ weakly (strongly).
\end{dfn}

Let us make two remarks regarding this last condition.
As we said earlier, if $\theta\colon [0,1]^m \to [0,1]$ is continuous then the map $\bar X \mapsto \theta(\bar X)$ is uniformly definable in models of $RV$.
Second, by the Stone-Weierstraß Theorem, every continuous $\theta$ can be arbitrarily well approximated by polynomials.
It follows that it is enough to consider only monomial test functions $\bar x^\alpha = \prod x_i^{\alpha_i}$, where $\alpha \in \bN^m$.

\begin{thm}
  \label{thm:TypesAreCondDist}
  Let $\bar X$ be an $m$-tuple in a model of $ARV$, $A$ a set, $\sA = \sigma(A)$.
  Then the joint conditional distribution $\dist(\bar X|\sA)$ depends only on $\tp(\bar X/A)$.
  Moreover, the map
  \begin{gather*}
    \zeta\colon \tp(\bar X/A) \mapsto \dist(\bar X|\sA)
  \end{gather*}
  is a homeomorphism between $\tS_m(A)$ (equipped with the logic topology) and $\fD_{[0,1]^m}(\sA)$ equipped with the topology of weak convergence.
\end{thm}
\begin{proof}
  The first assertion, as well as the injectivity of $\zeta$, are shown in \cite{BenYaacov:RandomVariables}.
  Let $\Omega$ be the Stone space of the underlying Boolean algebra of $\sA$.
  This is a compact, totally disconnected space, and $\sA$ is canonically identified with the algebra of clopen sets there.
  Let $\fD_0 \subseteq \fD_{[0,1]^m}(\sA)$ consist of all those $\vec \mu$ such that, for some finite partition $\{B_i\}_{i<k}$ of $\Omega$, and for all Borel $C \subseteq [0,1]^m$, the function $\vec \mu(C)$ is constant on each $B_i$.
  In other words, $\vec \mu \in \fD_0$ can be written as $\sum \bone_{B_i} \mu_i$ where each $\mu_i$ is an ordinary Borel probability measure on $[0,1]^m$.

  First, we claim that $\fD_0$ is dense in $\fD_{[0,1]^m}(\sA)$.
  Indeed, for $\vec \mu \in \fD_{[0,1]^m}(\sA)$, $Y \in L^1(\sA,[0,1])$, $\alpha \in \bN^m$ and $\varepsilon > 0$ let
  \begin{gather*}
    U_{\vec \mu, Y, \alpha, \varepsilon} = \left\{ \vec \nu : \left| \bE\left[ Y \int \bar x^\alpha\, d\vec \mu(\bar x) \right] - \bE\left[ Y \int \bar x^\alpha\, d\vec \nu(\bar x) \right] \right| < \varepsilon \right\}.
  \end{gather*}
  A weak neighbourhood $U$ of $\vec \mu$ always contains a finite intersection $\bigcap_{i<k} U_{\vec \mu, Y_i, \alpha_i, 3\varepsilon}$.
  For each $Y_i$ find a stair function $Z_i$ such that $|Z_i-Y_i| < \varepsilon$, so $U$ contains $\bigcap_{i<k} U_{\vec \mu, Z_i, \alpha_i, \varepsilon}$.
  Let $\{B_j\}_{j < \ell}$ be a finite partition of $\Omega$ on which each $Z_i$ is constant, and let $\mu_j$ be the average of $\vec \mu$ on $B_j$: $\mu_j(C) = \frac{\bE[ \bone_{B_j} \vec \mu(C) ]}{\bP[B_j]}$.
  Then $\vec \nu = \sum \bone_{B_j} \mu_j \in U \cap \fD_0$.

  Second, we claim that every $\vec \mu = \sum_{i<k} \bone_{B_i} \mu_i \in \fD_0$ (where $\{B_i\}$ is a partition of $\Omega$) lies in the image of $\zeta$.
  Indeed, let $\Omega' = \Omega \times [0,1]^m$, and define a probability Borel measure $\Omega'$ by
  \begin{gather*}
    \nu(C) = \sum_{i<k} (\bP \times \mu_i)\bigl( C \cap (B_i \times [0,1]^m) \bigr).
  \end{gather*}
  Clearly, the projection on the first component $\Omega' \rightarrow \Omega$ is measure-preserving, so $M = L^1\bigl( (\Omega',\nu), [0,1] \bigr)$ is a model of $RV$ which contains (a copy of) $A$, and we may embed $M \subseteq N \vDash ARV$.
  Let $\bar X \colon \Omega' \rightarrow [0,1]^m$ be the projection on the second component.
  Then $\bar X \in M^m \subseteq N^m$ and $\vec \mu = \dist(\bar X|\sA) = \zeta \tp(\bar X/A)$.

  Third, we claim that $\zeta$ is continuous.
  Indeed, let $Y \in L^1(\sA,[0,1]) = \dcl(A)$ and $\alpha \in \bN^m$.
  Since the map $\bar X \mapsto \bar X^\alpha$ is uniformly definable, the map $\bar X \mapsto E[Y\bar X^\alpha]$ is an definable by a formula over $A$, which will be denoted $E[ Y \bar x^\alpha ]$.
  If $p = \tp(\bar X/A)$ and $\vec \mu = \dist(\bar X|\sA) = \zeta p$ then
  \begin{gather*}
    E[ Y\bar x^\alpha]^{p(\bar x)} = E[ Y\bar X^\alpha] = \bE\left[ Y \int \bar x^\alpha\, d\vec \mu(\bar x) \right].
  \end{gather*}
  Thus the map $p \mapsto \bE\left[ Y \int \bar x^\alpha\, d(\zeta p)(\bar x) \right]$ is continuous in $p$, and by definition of weak convergence, $\zeta$ is continuous.

  Since $\tS_m(A)$ is compact and $\fD_{[0,1]^m}(\sA)$ Hausdorff, and given our three claims, $\zeta$ is a homeomorphism.
\end{proof}

From this point onwards we identify $m$-types over $A$ with $m$-dimensional conditional distributions over $\sigma(A)$.
In particular, from now on we shall omit $\zeta$ from the notation, writing $\int dp(\bar x)$ where before we wrote $\int d(\zeta p)(\bar x)$.
Strong convergence of conditional distributions also has a model theoretic counterpart.

\begin{cor}[Quantifier Elimination to Moments]
  \label{cor:MomentQE}
  Modulo the theory $ARV$, the $m$-ary formulae are exactly the (possibly infinite) continuous combinations of the $E[\bar x^\alpha]$.
  In particular, every formula can be approximated arbitrarily well by finite continuous combinations of these.
\end{cor}
\begin{proof}
  By the theorem, the map $p \mapsto \bigl( E[\bar x^\alpha]^p \bigr)_{\alpha \in \bN^m}$ is a topological embedding $\iota\colon \tS_m(\emptyset) \hookrightarrow [0,1]^{\bN^m}$.
  If $\varphi(\bar x)$ is any formula, then it can be identified with a continuous function $\varphi\colon \tS_m(\emptyset) \to [0,1]$, which, by Tietze's Extension Theorem, can be written as $\hat \varphi \circ \iota$ for some continuous $\hat \varphi\colon [0,1]^{\bN^m} \to [0,1]$.
  The statement follows.
\end{proof}

\begin{cor}
  \label{cor:DistributionSequentialCompactness}
  Every sequence $(\vec \mu_n)_n \subseteq \fD_{[0,1]^m}(\sA)$ admits a sub-sequence which converges weakly.
\end{cor}
\begin{proof}
  First of all, we may assume that $\sA$ is separable, since we may replace it with $\sigma\Bigl( \bigl\{ \vec \mu_n\bigl( \prod_{i<m} [0,q_i] \bigr) \bigr\}_{n\in\bN,\bar q \in \bQ^m} \Bigr)$.
  Then $\tS_m(\sA)$ is compact \emph{and} admits a countable basis, so every sequence there admits a converging sub-sequence.
\end{proof}

In case we wish to consider distributions of $\bR$-valued random variables we need to be a little more careful.

\begin{dfn}
  \label{dfn:Tight}
  A family of distributions $\fC \subseteq \fD_{\bR^m}(\sA)$ is \emph{tight} if for every $\varepsilon > 0$ there is $R \in \bR$ such that $\| \vec \mu( [-R,R]^m ) \|_1 > 1-\varepsilon$ for all $\vec \mu \in \fC$.

  We say that a family of $m$-tuples of random variables is \emph{bounded in measure} if their respective joint distributions form a tight family.
\end{dfn}

\begin{rmk}
  In \cite{BenYaacov:SchroedingersCat}, the first author pointed out (in a somewhat different formalism) that given any ``modulus of tightness'', the family of real-valued random variables respecting this modulus is interpretable as an imaginary sort in $ARV$ (or $APr$).
\end{rmk}

Let $\rho\colon [-\infty,\infty] \to [0,1]$ be any Borel map.
For $\vec \mu \in \fD_{\bR^m}(\sA)$, we may view $\vec \mu$ as a member of $\fD_{[-\infty,\infty]^m}$ and then let $\rho_*\vec \mu \in \fD_{[0,1]^m}(\sA)$ denote the image measure under $\rho$, i.e., $\rho_* \vec \mu (B) = \vec\mu \bigl( (\rho \times \cdots \times \rho)^{-1}[B] \bigr)$.

\begin{lem}
  \label{lem:TightConvergentSequence}
  Let $(\vec \mu_n)_n \subseteq \fD_{\bR^m}(\sA)$ be any sequence, and let $\rho\colon [-\infty,\infty] \to [0,1]$ be a homeomorphism.
  Then $(\vec \mu_n)_n$ converges weakly in $\fD_{\bR^m}(\sA)$ if and only if it is tight and $(\rho_*\vec \mu_n)_n$ converges weakly in $\fD_{[0,1]^m}(\sA)$.
\end{lem}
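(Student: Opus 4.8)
The plan is to prove both directions of the equivalence by reducing statements about $\vec\mu_n$ on $\bR^m$ to statements about $\rho_*\vec\mu_n$ on $[0,1]^m$, exploiting the fact that $\rho\colon[-\infty,\infty]\to[0,1]$ is a homeomorphism and that $\bR = \rho^{-1}\bigl((0,1)\bigr)$ sits inside the compact space $[-\infty,\infty]$ as a Borel set whose complement is the two-point set $\{-\infty,+\infty\}$. Throughout, weak convergence of distributions is tested against continuous functions, and by the Stone--Weierstrass remark made earlier it suffices to test against a countable family (e.g.\ monomials, or a countable dense set of continuous functions).

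For the forward direction, suppose $(\vec\mu_n)_n$ converges weakly in $\fD_{\bR^m}(\sA)$ to some $\vec\mu$. First I would show tightness: for continuous $\theta\colon\bR^m\to[0,1]$ supported in $[-R,R]^m$ and equal to $1$ on $[-R+1,R+1]^m$, weak convergence of $\int\theta\,d\vec\mu_n$ to $\int\theta\,d\vec\mu$, combined with the fact that $\|\vec\mu(\bR^m\setminus[-R,R]^m)\|_1\to 0$ as $R\to\infty$ (a single $L_1$-valued measure is tight), forces a uniform-in-$n$ bound, hence tightness of the family $\{\vec\mu_n\}$. Here I expect to need a small $L_1$-norm estimate rather than just weak-$L_1$ convergence; this is where tightness of the \emph{limit} measure is essential and is the reason tightness appears in the statement. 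Second, since $\rho$ is a homeomorphism of $[-\infty,\infty]$ onto $[0,1]$ and the $\vec\mu_n$ charge $\bR$, any continuous $\eta\colon[0,1]^m\to[0,1]$ pulls back to a continuous $\eta\circ(\rho\times\cdots\times\rho)$ on $[-\infty,\infty]^m$; restricting to $\bR^m$ this is bounded and continuous, and by tightness $\int(\eta\circ\rho^{\times m})\,d\vec\mu_n$ converges (approximate $\eta\circ\rho^{\times m}$ uniformly off a large compact set on which it is genuinely controlled by weak convergence). Thus $\int\eta\,d(\rho_*\vec\mu_n) = \int(\eta\circ\rho^{\times m})\,d\vec\mu_n$ converges weakly, i.e.\ $(\rho_*\vec\mu_n)_n$ converges weakly in $\fD_{[0,1]^m}(\sA)$.

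For the converse, assume $(\vec\mu_n)_n$ is tight and $(\rho_*\vec\mu_n)_n$ converges weakly in $\fD_{[0,1]^m}(\sA)$, say to $\vec\nu$. Tightness gives, for each $\varepsilon>0$, a radius $R$ with $\|\vec\mu_n([-R,R]^m)\|_1>1-\varepsilon$ uniformly; pushing forward, $\rho_*\vec\mu_n$ assigns mass $>1-\varepsilon$ (in $L_1$-norm) to the compact set $(\rho^{\times m})([-R,R]^m)\subseteq(0,1)^m$, and hence so does the limit $\vec\nu$; letting $\varepsilon\to0$ shows $\vec\nu$ is supported on $[0,1]^m$ with no mass on the faces where some coordinate is $0$ or $1$, so $\vec\nu = \rho_*\vec\mu$ for the well-defined pullback $\vec\mu := (\rho^{-1})^{\times m}_*\vec\nu \in \fD_{\bR^m}(\sA)$, which is moreover itself tight. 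Now for any continuous bounded $\theta\colon\bR^m\to[0,1]$, write $\theta = \theta'\circ(\rho^{-1})^{\times m}$ where $\theta'$ is the continuous extension to $[0,1]^m$ (continuous off the faces, and by tightness the faces carry negligible and uniformly small mass, so we may approximate $\theta$ uniformly by functions of the form $\eta\circ\rho^{\times m}$ with $\eta$ continuous on $[0,1]^m$, up to an $L_1$-error controlled by tightness). Then $\int\theta\,d\vec\mu_n = \int\eta\,d(\rho_*\vec\mu_n) + (\text{small})\to\int\eta\,d\vec\nu + (\text{small}) = \int\theta\,d\vec\mu$, giving weak convergence in $\fD_{\bR^m}(\sA)$.

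The main obstacle is the bookkeeping at the two points at infinity: $\rho$ does not carry $\bR^m$ onto $[0,1]^m$ but onto $(0,1)^m$, so a priori $\rho_*\vec\mu_n$ could leak mass toward the boundary and a limit in $\fD_{[0,1]^m}(\sA)$ might sit partly on the faces; it is precisely the tightness hypothesis that rules this out and lets one identify the limit as a genuine pushforward of a measure on $\bR^m$. The technical heart is therefore the uniform approximation argument: off a large compact box the $L_1$-masses are uniformly $<\varepsilon$, so the discontinuity of the pullback test functions at infinity contributes at most $\varepsilon$ to every integral, uniformly in $n$, and the convergence on the compact box follows from weak convergence of the (restricted) distributions. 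Once this ``$\varepsilon$ of slack at infinity, uniformly in $n$'' principle is set up cleanly, both directions are routine; I would state it once as a sublemma and invoke it twice.
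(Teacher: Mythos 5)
Your proof is correct and follows essentially the same route as the paper: compactify $\bR^m$ inside $[-\infty,\infty]^m \cong [0,1]^m$ via $\rho$, extract tightness in the forward direction from the tightness of the single limit measure together with convergence of $\|\int \chi_R\, d\vec\mu_n\|_1$ (plus enlarging $R$ to cover the finitely many initial terms), and in the converse direction use tightness to show the weak limit puts no mass on the boundary, so it is the pushforward of a genuine element of $\fD_{\bR^m}(\sA)$. The one point you flagged as a possible difficulty is harmless: since $\int \chi_R\, d\vec\mu_n \geq 0$, its $L_1$-norm equals $\bE\bigl[ \int \chi_R\, d\vec\mu_n \bigr]$, which is precisely the pairing against $Y = 1$ and hence converges by the definition of weak convergence.
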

\begin{proof}
  For $R  > 0$, let $\chi_R \colon \bR^m \to [0,1]$ be continuous with $\mathbf{1}_{[-R,R]^m} \leq \chi_R \leq \mathbf{1}_{[-R-1,R+1]^m}$.
  Notice that the sequence is tight if and only if, for every $\varepsilon > 0$ there is an $R$ such that $\|\int \chi_R\, d\vec \mu_n\|_1 > 1-\varepsilon$ for all $n$.

  For left to right, assume that $\vec \mu_n \to \vec \mu$ weakly.
  Then $\rho_*\vec \mu_n \to \rho_*\vec \mu$ weakly (since there are fewer test functions).
  In addition, for each $\varepsilon > 0$ there exists $R_0$ such that $\|\int \chi_{R_0}\, d\vec \mu\|_1 > 1-\varepsilon$.
  By assumption $\| \int \chi_{R_0}\, d\vec \mu_n \|_1 \to \|\int \chi_{R_0}\, d\vec \mu\|_1$, so for some $n_0$ we have $\| \int \chi_{R_0}\, d\vec \mu_n \|_1 > 1-\varepsilon$ for all $n \geq n_0$.
  We can then find $R_1$ such that $\| \int \chi_{R_1}\, d\vec \mu_n \|_1 > 1-\varepsilon$ for all $n < n_0$.
  Let $R = \max(R_0,R_1)$.
  Then $\| \int \chi_R\, d\vec \mu_n \|_1 > 1-\varepsilon$ for all $n$ and the sequence is tight.

  For right to left, we assume that the sequence is tight and that $\rho_*\vec \mu_n \to \vec \nu$ weakly in $\fD_{[0,1]^m}(\sA)$.
  Then there exists $\vec \mu \in \fD_{[-\infty,\infty]^m}(\sA)$ such that $\rho_*\vec \mu = \vec \nu$ and $\vec \mu_n \to \vec \mu$ weakly in $\fD_{[-\infty,\infty]^m}(\sA)$.
  By tightness, for each $\varepsilon > 0$ there is $R$ such that $\| \int \chi_R\, d\vec \mu_n \|_1 > 1-\varepsilon$ for all $n$.
  By weak convergence we obtain $\| \int \chi_R \, d\vec \mu \|_1 \geq 1-\varepsilon$.
  We conclude that $\vec \mu( \bR^m) = 1$, i.e., $\mu \in \fD_{\bR^m}(\sA)$, as desired.
\end{proof}

\begin{cor}[{\cite[Theorem~1.7]{Berkes-Rosenthal:AlmostExchangeableSequences}}]
  \label{cor:TightConvergentSubSequence}
  Every tight sequence in $\fD_{\bR^m}(\sA)$ has a weakly converging sub-sequence.
\end{cor}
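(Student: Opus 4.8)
The plan is to transport the problem to the bounded case already handled, via a homeomorphism $[-\infty,\infty]\to[0,1]$, and then come back using \fref{lem:TightConvergentSequence}. So fix a homeomorphism $\rho\colon[-\infty,\infty]\to[0,1]$ and consider the pushforward sequence $(\rho_*\vec\mu_n)_n\subseteq\fD_{[0,1]^m}(\sA)$.

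First I would apply the corollary proved just above (every sequence in $\fD_{[0,1]^m}(\sA)$ admits a weakly convergent sub-sequence) to obtain indices $n_0<n_1<\cdots$ such that $(\rho_*\vec\mu_{n_k})_k$ converges weakly in $\fD_{[0,1]^m}(\sA)$. Next I would note that passing to a sub-sequence preserves tightness for free: whatever $R$ witnesses tightness of $(\vec\mu_n)_n$ at a given level $\varepsilon$ witnesses it for $(\vec\mu_{n_k})_k$ as well, since the defining inequality $\|\vec\mu_n([-R,R]^m)\|_1>1-\varepsilon$ is required of every term. Hence $(\vec\mu_{n_k})_k$ is a tight sequence in $\fD_{\bR^m}(\sA)$ whose $\rho$-pushforward converges weakly in $\fD_{[0,1]^m}(\sA)$, and the right-to-left implication of \fref{lem:TightConvergentSequence} yields that $(\vec\mu_{n_k})_k$ converges weakly in $\fD_{\bR^m}(\sA)$, which is exactly what is wanted.

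I do not expect a genuine obstacle here: the substantive content is already packaged into \fref{lem:TightConvergentSequence} and into the preceding corollary for $\fD_{[0,1]^m}(\sA)$, so the proof is a short deduction. If anything merits a word of caution, it is the point hidden in that preceding corollary — the reduction to a separable sub-algebra (generated by the values $\vec\mu_n$ take on a countable family of boxes), which is what makes the relevant type space second countable and hence sequentially compact, together with the remark that weak convergence of the $L_1(\sA_0)$-valued integrals over $\sA_0$ coincides, by conditioning on $\sA_0$, with weak convergence of the corresponding integrals over all of $\sA$. Since that is subsumed in the statement of the corollary we invoke, nothing further is needed here.
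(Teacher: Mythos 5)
Your argument is correct and is exactly the deduction the paper intends: the corollary is stated without a separate proof precisely because it follows by pushing forward along $\rho$, extracting a weakly convergent sub-sequence in $\fD_{[0,1]^m}(\sA)$ via the preceding corollary, noting that tightness passes to sub-sequences, and then invoking the right-to-left direction of \fref{lem:TightConvergentSequence}. Nothing is missing.
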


Next, we wish to relate the topology of strong convergence of conditional distributions with a topology on the corresponding space of types.
As a first approximation, we prove:

\begin{thm}
  \label{thm:StrongConvergenceIncomplete}
  Let $A$ be a set of parameters, and identify $\tS_m(A)$ with $\fD_{[0,1]^m}(\sigma(A))$ as above.
  Then the topology of $d$-convergence (of types) refines that of strong convergence (of distributions), which in turn refines that of $\Cb$-convergence (of types).
\end{thm}
\begin{proof}
  Let us first show that $d$-convergence implies strong convergence.
  Let $M$ be a large model containing $A$, and let $\alpha \in \bN^m$.
  Then the map $M^m \to L^1(\sigma(A))$, $\bar X \mapsto E[\bar X^\alpha|\sigma(A)]$ is continuous, where both spaces are equipped with the usual $L^1$ metric.
  It follows that the map $(\tS_m(A),d) \to L^1(\sigma(A))$, $p \mapsto \int \bar x^\alpha\,dp(\bar x)$ is continuous.

  We now prove that strong convergence implies $\Cb$-convergence.
  For this purpose we need to show that for every formula $\varphi(\bar x,\bar y)$, the map that associates $p \mapsto \Cb_\varphi(p)$ is continuous when equipping $\tS_m(A)$ with the topology of strong convergence.
  By \autoref{cor:MomentQE}, it is enough to show this where $\varphi(\bar x,\bar y) = E[\bar x^\alpha\bar y^\beta]$.
  Indeed, let $\bar X, \bar X' \in M^m$, $p = \tp(\bar X/A)$, $p' = \tp(\bar X'/A)$, $f = \bE[\bar X^\alpha|\sigma(A)]$, $f' = \bE[\bar X'^\alpha|\sigma(A)]$.
  Then for each $\bar Y \in \dcl(A)^k$ we have
  \begin{gather*}
    \bigl| \varphi(\bar X,\bar Y) - \varphi(\bar X',\bar Y) \bigr|
    = \bigr| \bE[ Y^\beta (f-f') ] \bigr|
    \leq \|f-f'\|_1,
  \end{gather*}
  so $d\bigl( \Cb_\varphi(p), \Cb_\varphi(p') \bigr) \leq \|f-f'\|_1$, and $p \mapsto \Cb_\varphi(p)$ is continuous in strong convergence.
\end{proof}

\section{Strongly finitely based (SFB) theories and lovely pairs}
\label{sec:SFB}

In order to show that the three topologies referred to in \autoref{thm:StrongConvergenceIncomplete} agree, we need to show that the canonical base topology agrees with the distance on $\tS_m(A)$.

\begin{dfn}
  \label{dfn:SFB}
  We say that a theory $T$ is \emph{strongly finitely based (SFB)} if for every model $M \vDash T$ and every $n$, the topologies $\sT_\Cb$ and $\sT_d$ agree on $\tS_m(M)$ (this does not change if we allow any algebraically closed set $A$ instead of $M$).
\end{dfn}

We recall from \autoref{sec:Convergence} that the canonical base of a type $p \in \tS_m(M)$ lies in an infinitary imaginary sort $S_{\Cb_m} = \prod_{\varphi\in \Phi_m} S_{\Cb_\varphi}$ of $M$, where $\Phi_m$ is some sufficient set of formulae as per \autoref{ntn:Phi}.
Let $\cC_m(M) \subseteq S_{\Cb_m}^M$ consist of those tuples which actually arise as canonical bases of types over $M$.
It is not difficult to see that $\cC_m(M)$ is a type-definable set, and uniformly so in all models of $T$ (see \cite[Lemma~1.3]{BenYaacov:UniformCanonicalBases}).
Since the type can be recovered from its canonical base, the map $\Cb_{M,m}$ is injective, and by definition the canonical base map $\Cb\colon (\tS_m(M),\sT_\Cb) \to \cC_m(M)$ is a homeomorphism.

\begin{prp}
  \label{prp:SFBoStab}
  Assume that $T$ is SFB.
  Then $T$ is $\aleph_0$-stable.
\end{prp}
\begin{proof}
  Let $M$ be a separable model and let $m \in \bN$.
  Since $\Phi_m$ is countable, $S_{\Cb_m}^M$ is separable, and so is its subset $\cC_m(M)$.
  Therefore $\sT_\Cb$ is separable on $\tS_m(M)$, and by SFB, $(\tS_m(M),d)$ is separable.
\end{proof}

Our next goal is to give a general criterion for SFB.
For this, let us recall a few facts regarding definable sets in continuous logic.

\begin{dfn}
  Let $M$ be any structure, $X \subseteq M$ a possibly large subset, $A \subseteq M$ a set of parameters.
  We say that $X$ is \emph{($A$-)definable} in $M$ if it is closed and the predicate $d(x,X)$ is definable (over $A$).
\end{dfn}
Definable subsets of $M^n$ are defined similarly.

Let us also recall the following result, due to the third author.
For a proof see \cite{BenYaacov-Usvyatsov:dFiniteness}.
\begin{fct}[Ryll-Nardzewski Theorem for metric structures]
  Let $T$ be a theory in a countable language.
  Then the following are equivalent:
  \begin{enumerate}
  \item $T$ is $\aleph_0$-categorical, i.e., admits a unique separable model up to isomorphism.
  \item $T$ is complete and for each $m \in \bN$, the metric topology and the logic topology on $\tS_m(T)$ agree.
  \item $T$ is complete and the metric topology and the logic topology on $\tS_\omega(T)$ agree.
  \end{enumerate}
  In particular, in an $\aleph_0$-categorical theory, every type-definable set $X$ is definable (since the map $\tp(x) \mapsto d(x,X)$ is metrically continuous, and therefore continuous, so the predicate $d(x,X)$ is definable).
\end{fct}
(Notice that the separable models include any possible compact model of $T$, so $\aleph_0$-categoricity implies completeness by Vaught's Test.)

In particular, if $T$ is $\aleph_0$-categorical, then $\cC_m$ is a definable set, i.e., $\cC_m(M)$ is uniformly definable is all models of $T$.
Definability of sets is most often used as follows:

\begin{fct}[{\cite{BenYaacov:DefinabilityOfGroups} or \cite{BenYaacov-Berenstein-Henson-Usvyatsov:NewtonMS}}]
  \label{fct:DefSet}
  Let $M$ be a structure, $X \subseteq M$ a closed, possibly large subset, $A \subseteq M$ a set of parameters.
  Then the following are equivalent:
  \begin{enumerate}
  \item The set $X$ is $A$-definable.
  \item For every formula $\varphi(x,\bar y)$ (possibly over $A$), the predicate $\psi(\bar y) = \inf_{x\in X} \varphi(x,\bar y)$ is definable by a formula over $A$ as well.
  \end{enumerate}
\end{fct}

Let us now recall a few facts regarding Poizat's beautiful pairs \cite{Poizat:Paires}.
We define an \emph{elementary pair} of models of $T$ to be a pair $(M,N)$, where $N \prec M \vDash T$.
We view such a pair as a structure $(M,P)$ in $\cL_P = \cL \cup \{P\}$, where $P$ is a new $1$-Lipschitz unary predicate symbol measuring the distance to $N$, and we may also write $N = P(M)$.
A \emph{beautiful pair} of models of $T$ is an elementary pair $(M,P)$ such that  $P(M)$ is $|\cL|^+$-saturated, and $M$ is $\aleph_0$-saturated over $P(M)$.
We define $T_P$ as the $\cL_P$-theory of all beautiful pairs of models of $T$.
If saturated models of $T_P$ are not beautiful pairs (which may happen, for example, if $T$ is a classical stable theory with the finite cover property) then (continuous) first order logic is not adequate for the consideration of the class of beautiful pairs.
(On the other hand, \emph{positive logic} always provides an adequate framework, see \cite{BenYaacov:NonFirstOrderLovelyPairs}.)
If saturated models of $T_P$ \emph{are} beautiful pairs then continuous first order logic is adequate and we shall say that the class of beautiful pairs of models of $T$ is \emph{almost elementary}.

\begin{fct}
  \label{fct:BeautifulPairs}
  Assume that $T$ is $\aleph_0$-categorical, or more generally, that $\cC_m$ is a definable set for all $m$.
  Then the class of beautiful pairs of models of $T$ is almost elementary.
\end{fct}
\begin{proof}
  See \cite[Theorem~4.4]{BenYaacov:UniformCanonicalBases}.
  To sketch the argument, one can always express that $(M,P)$ is an elementary pair.
  Since $\cC_m$ is definable, one can quantify over it and express that for every $p \in \tS_m(M)$ (i.e., for every canonical base of such type), and every finite subset $A \subseteq M$, sufficiently good approximations (uniform, in finitely many formulae) of the restriction $p\rest_{A \cup P}$ are realised in $M$.
  This is true in every beautiful pair, and conversely, if $(M,P)$ is sufficiently saturated and satisfies this theory then $p\rest_{A \cup P}$ is actually realised, so $(M,P)$ is beautiful.
\end{proof}

\begin{lem}
  \label{lem:PCbDef}
  Let $(M,P)$ be an elementary pair of models and $\varphi(\bar x,\bar y) \in \Phi_m$.
  Then the map $\bar a \mapsto \Cb_\varphi(\bar a/P)$ is uniformly definable in $(M,P)$, i.e., its graph is definable by a partial type which does not depend on $(M,P)$.
\end{lem}
\begin{proof}
  The graph of $z = \Cb_\varphi(\bar x/P)$ is defined by:
  \begin{gather*}
    P(z) = 0 \quad \& \quad \sup_{\bar y \in P} \, \bigl| \varphi(\bar x,\bar y)-d_{\bar x}\varphi(\bar y,z) \bigr| = 0.
  \end{gather*}
  (See also \cite{BenYaacov:UniformCanonicalBases}.)
\end{proof}

It follows that for every $m$ we have a uniformly definable map $\theta\colon (M,P)^m \to \cC_m(P)$ inducing a continuous function $\hat \theta\colon \tS_m(T_P) \to \tS_{\Cb_m}(T)$ given as follows (here $\tS_{\Cb_m}(T)$ is the space of types in the sort $S_{\Cb_m}$).
\begin{gather}
  \label{eq:Theta}
  \begin{array}{cccc}
    \theta\colon & \bar a & \mapsto & \Cb(\bar a/P), \\
    & \\
    \hat \theta\colon & \tp^{\cL_P}(\bar a) & \mapsto &
    \tp\bigl( \Cb(\bar a/P) \bigr) = \tp\bigl( \theta(\bar a) \bigr).
  \end{array}
\end{gather}

\begin{fct}
  \label{fct:BPTypes}
  Let $(M,P)$ and $(N,P)$ be two beautiful pairs of models of $T$ and let $\bar a \in M$ and $\bar b \in N$ be two $m$-tuples.
  Then $\bar a \equiv^{\cL_P} \bar b$ if and only if $\theta(\bar a) \equiv \theta(\bar b)$, i.e., if and only if $\hat \theta(\bar a) = \hat \theta(\bar b)$.
\end{fct}
\begin{proof}
  One direction holds since $\hat \theta$ is well defined.
  The converse is proved as for \cite[Théorème~4]{Poizat:Paires}, checking that the family of finite partial maps $f\colon M \rightarrow N$ such that $\theta(\dom f) \equiv \theta(\img f)$ forms a back-and-forth system between $(M,P)$ and $(N,P)$.
\end{proof}

\begin{prp}
  \label{prp:TPTypes}
  Assume the class of beautiful pairs of models of $T$ is almost elementary.
  Then the map $\hat \theta$ defined above is a homeomorphic embedding.
\end{prp}
\begin{proof}
  We have already observed that $\hat \theta$ is a continuous map from a compact space into a Hausdorff space.
  Therefore, all we need to show is that it is injective.
  Let $(M,P), (N,P) \vDash T_P$, $\bar a \in M^m$, $\bar b \in N^m$, and assume that $\hat \theta(\bar a) = \hat \theta(\bar b)$.
  We may replace both $(M,P)$ and $(N,P)$ by $|\cL|^+$-saturated elementary extensions.
  By assumption $(M,P)$ and $(N,P)$ are beautiful pairs and we may apply \autoref{fct:BPTypes}.
\end{proof}

\begin{thm}
  \label{thm:SFBEqOCatPairs}
  Let $T$ be any stable continuous first order theory.
  Then $T_P$ is $\aleph_0$-categorical if and only if $T$ is $\aleph_0$-categorical and SFB.
\end{thm}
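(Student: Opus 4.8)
The plan is to prove the two implications separately, using the machinery developed in the preceding lemmas and propositions, and in particular the map $\hat\theta\colon \tS_m(T_P)\to\tS_{\Cb(m)}(T)$ from \fref{eq:Theta} and the fact (\fref{prp:TPTypes}) that it is a homeomorphic embedding whenever the class of beautiful pairs is almost elementary.

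First I would prove that $\aleph_0$-categoricity and SFB of $T$ imply $\aleph_0$-categoricity of $T_P$. Since $T$ is $\aleph_0$-categorical, \fref{prp:oCatPairsExist} tells us that the class of beautiful pairs is almost elementary, so $T_P$ is a genuine first-order theory with saturated models being beautiful pairs, and by \fref{prp:TPTypes} the map $\hat\theta$ is a homeomorphic embedding of $\tS_m(T_P)$ into $\tS_{\Cb(m)}(T)$. To apply the Ryll-Nardzewski criterion to $T_P$ it suffices to show that on each $\tS_m(T_P)$ the logic and metric topologies agree; since $\hat\theta$ is a logic-topology embedding, it is enough to show it is also a metric-topology embedding, i.e.\ that the $\cL_P$-metric on $\tp^{\cL_P}(\bar a)$ is controlled by the $\cL$-metric on $\tp^\cL(\Cb(\bar a/P))$. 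Here is where SFB enters: for a beautiful pair $(M,P)$, convergence of $\tp^\cL(\Cb(\bar a_n/P))\to\tp^\cL(\Cb(\bar a/P))$ in the logic topology of $T$ is exactly $\Cb$-convergence $\tp^\cL(\bar a_n/P)\to^{\Cb}\tp^\cL(\bar a/P)$, which by SFB is $d$-convergence of these types over $P$; combined with the fact that $M$ is $\aleph_0$-saturated over $P$ (so the pair-metric distance between $\tp^{\cL_P}(\bar a)$ and $\tp^{\cL_P}(\bar b)$ is realised by tuples with the same type over $P$ up to a small perturbation) this gives metric convergence in $T_P$. Hence logic and metric topologies agree on $\tS_m(T_P)$ for all $m$, and $T_P$ is $\aleph_0$-categorical by Ryll-Nardzewski.

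Conversely, suppose $T_P$ is $\aleph_0$-categorical. Then in particular saturated models of $T_P$ are separable-saturated, hence are beautiful pairs, so the class of beautiful pairs is almost elementary and \fref{prp:TPTypes} applies. First one checks $T$ itself is $\aleph_0$-categorical: a separable model $M$ of $T$ can be expanded to a beautiful pair $(M',P)$ with $M'\succeq M$ and $M'$ still separable, and $\aleph_0$-categoricity of $T_P$ forces uniqueness, from which uniqueness of the separable model of $T$ follows (alternatively: $\tS_m(T)$ is a continuous image of $\tS_m(T_P)$ via the reduct map, or one argues that the logic and metric topologies on $\tS_m(T)$ agree because they do on $\tS_m(T_P)$). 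For SFB: fix a model $M\models T$, embed it in a beautiful pair $(M',P)$, and take $p_n,p\in\tS_n(P)$ with $\Cb(p_n)\to\Cb(p)$ in the logic topology of $T$; realising these types by tuples $\bar a_n,\bar a$ in $M'$ (which is $\aleph_0$-saturated over $P$, so we may take $\bar a_n\models p_n$), we get $\tp^\cL(\theta(\bar a_n))\to\tp^\cL(\theta(\bar a))$, hence by injectivity and compactness of $\hat\theta$ that $\tp^{\cL_P}(\bar a_n)\to\tp^{\cL_P}(\bar a)$ in the logic topology of $T_P$; by $\aleph_0$-categoricity of $T_P$ and Ryll-Nardzewski this is metric convergence in $T_P$, so there are realisations $\bar a_n'\equiv^{\cL_P}\bar a_n$ with $\bar a_n'\to\bar a$, and $\bar a_n'$ still realises (a conjugate of) $p_n$ over $P$ while $\theta(\bar a_n')$ still converges, giving $p_n\to^d p$. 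Thus $\cT_{\Cb}=\cT_d$ on $\tS_n(P)$ for every beautiful-pair bottom model $P$; since every model of $T$ embeds elementarily in such a $P$ and the topologies in question are determined by the model up to elementary extension, $T$ is SFB.

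The main obstacle I anticipate is the bookkeeping around which structure the relevant metric lives on and making sure the perturbation/approximation arguments go through cleanly: in the pair $(M',P)$ the tuples $\bar a_n$ realise types over all of $P$, whereas $\Cb(p_n)\in\cC_n(P)$ is a tuple inside $P$, so one must be careful that the pair-type $\tp^{\cL_P}(\bar a_n)$ is genuinely coded by $\tp^\cL(\Cb(\bar a_n/P))$ (this is exactly \fref{fct:BPTypes}) and that $\aleph_0$-saturation of $M'$ over $P$ lets one move from ``same type over $P$'' to ``metrically close in $\tS_n(P)$'' and back; conversely, translating $d$-convergence in $T_P$ (which is metric convergence of pair-types, a priori a statement about realisations in a pair) into $d$-convergence of types over $P$ in $T$ requires noting that a convergent sequence of pair-realisations restricts to a convergent sequence of $\cL$-realisations over $P$ with the right canonical bases. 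None of these steps is deep, but they are where an error could hide, so the write-up should isolate them as short explicit sub-arguments rather than gloss over them.
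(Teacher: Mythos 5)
Your overall architecture matches the paper's: both directions go through the map $\hat\theta$ of \fref{eq:Theta}, \fref{prp:TPTypes} and the metric Ryll-Nardzewski theorem. In the direction ``$T$ $\aleph_0$-categorical and SFB $\Rightarrow$ $T_P$ $\aleph_0$-categorical'' your plan is essentially the paper's, modulo one imprecision that needs repair: logic convergence of $\tp^\cL\bigl(\Cb(\bar a_n/P)\bigr)$ is \emph{not} ``exactly'' $\Cb$-convergence of $\tp(\bar a_n/P)$ --- the former is convergence of the \emph{types} of the canonical bases, the latter is metric convergence of the canonical bases themselves as tuples. To bridge them you must invoke $\aleph_0$-categoricity of $T$ once more (on the countable tuple sort $S_{\Cb(\bar x)}$) to produce realisations $C_n \to C$ of those types, and then \emph{replace} the original types by the types $q_n, q$ over a suitable model with $\Cb(q_n) = C_n$ and $\Cb(q) = C$, checking at the end via \fref{fct:BPTypes} that the resulting realisations still have pair-types $p_n$ and $p$. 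This is exactly what the paper does and the fix is routine.

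The direction ``$T_P$ $\aleph_0$-categorical $\Rightarrow$ $T$ SFB'' has a genuine gap which you flag parenthetically but do not resolve: the conjugation problem. From metric convergence of the pair-types you obtain realisations $\bar a_n' \to \bar a$ with $\bar a_n' \equiv^{\cL_P} \bar a_n$; but the pair-automorphism witnessing this moves $P$ only setwise, so $\tp(\bar a_n'/P)$ is $\sigma_n(p_n)$ for some automorphism $\sigma_n$ of $P$, and what you have actually proved is $d\bigl(\sigma_n(p_n), p\bigr) \to 0$, not $d(p_n,p)\to 0$. Attempting to bound $d\bigl(p_n, \sigma_n(p_n)\bigr)$ from $d\bigl(\Cb(p_n), \sigma_n\Cb(p_n)\bigr) \to 0$ is precisely an instance of the SFB statement you are trying to prove, so the argument is circular at this point. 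The paper avoids this entirely by a different device: it introduces the two-variable partial $\cL_P$-type $r(x,y)$ saying $x \equiv_P y$, uses $\aleph_0$-categoricity of $T_P$ to make $d(xy,r)$ a definable predicate, and extracts by compactness a \emph{uniform} $\delta$ for each $\varepsilon$ such that $d\bigl(\theta(x),\theta(y)\bigr) < \delta$ implies $d(xy,r) < \varepsilon/2$. Realising $p$ and $q$ by $a,b$ \emph{in the same pair} then yields $a',b'$ with $d(ab,a'b')<\varepsilon/2$ and $\tp(a'/P)=\tp(b'/P)$, whence $d(p,q)<\varepsilon$ by the triangle inequality through that common type --- no conjugation of the base ever occurs. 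You need to adopt this two-variable trick, or some other way of comparing the two types over the \emph{same} copy of $P$, for your argument to close.
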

\begin{proof}
  Assume first that $T_P$ is $\aleph_0$-categorical.
  Then clearly $T$ is $\aleph_0$-categorical (indeed, if $\tS_m(T_P)$ is metrically compact then so is $\tS_m(T)$).

  So fix $m \in \bN$ and let $\bar x$ be an $m$-tuple.
  We shall in fact prove a uniform version of SFB, namely that for every $\varepsilon > 0$ there is $\delta > 0$
  such that if $N \vDash T$ and $p,q \in \tS_m(N)$ are such that $d(\Cb(p),\Cb(q)) < \delta$ (where the distance between canonical bases is as defined in \autoref{eq:CbMet}), then $d(p,q) \leq \varepsilon$.
  For simplicity of notation we shall assume that $m = 1$ and drop the bars.

  Recall from \autoref{lem:PCbDef} that the map $\theta\colon a \mapsto \Cb(a/P)$ is uniformly definable in $T_P$.
  Let $r(x,y)$ be the partial $\cL_P$-type saying that $x \equiv_P y$.
  Since $T_P$ is $\aleph_0$-categorical, the distance $d(xy,r)$ is a definable predicate.
  Consider now the partial $\cL_P$-type consisting of $\{d(xy,r) \geq \varepsilon/2\} \cup \{d\bigl( \theta(x),\theta(y) \bigr) < \delta\}_{\delta > 0}$.
  This partial type is contradictory, whence we obtain a $\delta > 0$ such that
  \begin{gather*}
    d\bigl(\theta(x),\theta(y)\bigr) < \delta
    \vdash
    d(xy,r) < \varepsilon/2.
  \end{gather*}
  We claim that this $\delta$ is as required, i.e., if $N \vDash T$, $p,q \in \tS_1(N)$, and $d\bigl( \Cb(p),\Cb(q) \bigr) < \delta$, then $d(p,q) \leq \varepsilon$.
  Indeed, passing to an elementary extension and taking non forking extensions of the types we may assume that $N$ is $\aleph_1$-saturated, and then find $M \succ N$ which is $|N|^+$-saturated, so $(M,N) = (M,P) \vDash T_P$ is a beautiful pair.
  Let $C = \Cb(p)$, $D = \Cb(q)$, so $d(C,D) < \delta$.

  By our saturation assumption there exist $a,b \in M$ such that $a \vDash p$ and $b \vDash q$, so $\theta(a) = C$, $\theta(b) = D$, and therefore $d( ab,r ) < \varepsilon/2$.
  In other words, there exist $a'b' \in M$ such that $d(ab,a'b') < \varepsilon/2$ and $\tp(a'/N) = \tp(b'/N) = p'$, say.
  Therefore
  \begin{gather*}
    d(p,q) \leq d(p,p') + d(p',q)
    < \varepsilon/2+\varepsilon/2 = \varepsilon.
  \end{gather*}

  Conversely, assume that $T$ is $\aleph_0$-categorical and is SFB.
  By the metric Ryll-Nardzewsky Theorem, we need to show that for each $m$, the logic topology and the metric topology on $\tS_m(T_P)$ coincide.
  In other words, we need to show that if $p_n \to p$ in $\tS_m(T_P)$, then $p_n \to^d p$ there.

  Assume then that $p_n \to p$.
  Since $T$ is $\aleph_0$-categorical, the class of beautiful pairs of models of $T$ is almost elementary (\autoref{fct:BeautifulPairs}).
  By \autoref{prp:TPTypes}, the map $\hat \theta \colon \tp^{\cL_P}(a) \mapsto \tp\bigl( \Cb(a/P) \bigr)$ is a topological embedding, so $\hat \theta(p_n) \to \hat \theta(p)$, and since $T$ is $\aleph_0$-categorical we have $\hat \theta(p_n) \to^d \hat \theta(p)$.
  In other words, in a sufficiently saturated model $N \vDash T$ we can find infinite tuples $C_n \vDash \hat \theta(p_n)$ and $C \vDash \hat \theta(p)$ such that $C_n \to C$.

  Write $C = \{c_\varphi\}_{\varphi\in \Phi_m}$, and let $q \in \tS_m(N)$ be the unique type over $N$ such that $\Cb(q) = C$, i.e.,
  \begin{gather*}
    \varphi(x,b)^q = d_x\varphi(b,c_\varphi),
    \qquad
    b \in N,
    \varphi \in \Phi(x).
  \end{gather*}
  Define $q_n \in \tS_m(N)$ such that $\Cb(q_n) = C_n$ similarly.
  Then $q_n \to^\Cb q$ by definition, and since $T$ is SFB $q_n \to^d q$.
  Let $a_n \vDash q_n$ and $a \vDash q$ witness this, so $a_n \to a$ in some $M \succeq N$, which we may assume to be $|N|^+$-saturated, so $(M,N) = (M,P)$ is a beautiful pair.
  Then $\theta(a) = C \vDash \hat \theta(p)$ implies $p = \tp^{\cL_P}(a)$, and similarly $\tp^{\cL_P}(a_n) = p_n$.
  Thus $a_n \to a$ witnesses that $p_n \to p$, and the proof is complete.
\end{proof}

The intuitive idea behind this criterion is roughly as follows.
We assume that $T$ is $\aleph_0$-categorical, and let $(M,N)$ be a lovely pair of models thereof.
Then every type in $\tS_m(N)$ is realised by some $\bar a \in M^m$, and the map $\tp(\bar a/N) \mapsto \tp^{(M,N)}(\bar a)$ is a well defined surjection $\tS_m(N) \rightarrow \tS_m(T_P)$.
Since formulae in $T_P$ essentially give information about $\Cb(\bar x/P)$ (compare with the more explicit approach of \cite[Section~4]{BenYaacov:UniformCanonicalBases}), and since $T$ is assumed to be $\aleph_0$-categorical, the logic topology on $\tS_m(T_P)$ agrees with the quotient of the canonical base topology on $\tS_m(N)$.
On the other hand, the distance topology on $\tS_m(T)$ is the quotient of the distance topology on $\tS_m(N)$.
Thus, the gap between the logic and distance topologies on $\tS_m(T_P)$ (i.e., $T_P$ being $\aleph_0$-categorical or not) boils down, more or less, to the gap between the canonical base and distance topologies over a model of $T$ (i.e., $T$ being SFB or not).

In the case of classical (discrete) first order logic, the situation covered by \autoref{thm:SFBEqOCatPairs} boils down to the one covered by the following result of Zilber et al.

\begin{fct}[{\cite[Theorem~5.12]{Pillay:GeometricStability}}]
  \label{fct:Zilber}
  An $\aleph_0$-categorical, $\aleph_0$-stable classical theory is one-based.
\end{fct}

\begin{prp}
  \label{prp:ZilberEquivs}
  Let $T$ be a classical $\aleph_0$-categorical (and stable) theory.
  Then the following are equivalent:
  \begin{enumerate}
  \item $T$ is SFB.
  \item $T$ is $\aleph_0$-stable.
  \item $T$ is one based.
  \item $T$ is finitely based (meaning that for every $m$ there exists $k$ such that every indiscernible sequence of $m$-tuples, is a Morley sequence over its first $k$ elements).
  \end{enumerate}
\end{prp}
\begin{proof}
  \begin{cycprf}
  \item[\impnext]
    We have already seen that SFB implies $\aleph_0$-stability.
  \item[\impnext]
    By \autoref{fct:Zilber}.
  \item[\impnext]
    Immediate ($k = 1$).
  \item[\impfirst]
    Let us fix $m = 1$ and the corresponding $k$.
    Then the type of an indiscernible sequence (of singletons) is determined by the type of the first $k+1$ members of that sequence, so only finitely many types of indiscernible sequences exist.
    On the other hand, if $(M,P) \vDash T_P$ and $a \in M$, then $\tp^{\cL_P}(a)$ is determined by the $\cL$-type of $\Cb(a/P)$, which in turn is determined by the type of a Morley sequence in $\tp(a/P)$.
    We conclude that $\tS_1(T_P)$ is finite, and by similar reasoning so is $\tS_m(T_P)$ for all $m$.
    Therefore $T_P$ is $\aleph_0$-categorical, so $T$ is SFB by \autoref{thm:SFBEqOCatPairs}.
  \end{cycprf}
\end{proof}

Our \autoref{thm:SFBEqOCatPairs} is therefore mostly interesting for $\aleph_0$-categorical continuous theories, to which \autoref{prp:ZilberEquivs} does \emph{not} generalise.
For the direction ``one-based $\Longrightarrow$ SFB'' we merely observe that the proof given above does not carry over to the metric setting.
For the direction ``SFB $\Longrightarrow$ one-based'' we present below a counter-example.

\begin{ntn}
  For any theory $T$, let $T_{P,0}$ denote the theory of elementary pairs of models of $T$ in the language $\cL_P$ (which is an elementary class).
\end{ntn}

\begin{cor}
  \label{cor:HilbertSFB}
  The theory of infinite dimensional Hilbert spaces $IHS$ is SFB.
\end{cor}
\begin{proof}
  Let $IHS'_P$ consist of $IHS_{P,0}$ together with the axiom scheme expressing, for each $k$, that there exist $k$ orthonormal vectors which are orthogonal to $P$ (we leave the details to the reader, pointing out that since $P$ is definable modulo $IHS_P$, one may quantify over it).
  It is then not difficult to check that every beautiful pair of models of $IHS$ is a model of $IHS'_P$, so $IHS'_P \subseteq IHS_P$.
  On the other hand, $IHS'_P$ admits a unique separable model $(H \oplus H_1, H)$ where $H \cong H_1 \vDash IHS$ are separable.
  Thus $IHS'_P$ is complete, so $IHS'_P = IHS_P$, and $IHS$ is SFB by \autoref{thm:SFBEqOCatPairs}.
\end{proof}

\begin{cor}
  \label{cor:ProbAlgSFB}
  The theories $APr$ and $ARV$ are SFB.
\end{cor}
\begin{proof}
  The argument is essentially the same as above.
  We define $APr'_P$ to consist of $APr_{P,0}$ along with the axiom saying that $M$ is atomless over $P$, expressible as
  \begin{gather*}
    \sup_x \, \inf_y \, \sup_{z \in P} \, \left| \half \mu(x \cap z) - \mu(x \cap y \cap z) \right| = 0.
  \end{gather*}
  Then again every beautiful pair is a model of $APr'_P$ and $APr'_P$ admits a unique separable model, namely $(\fB(X\times Y),\fB(X))$ where $X = Y = [0,1]$ is equipped with the Lebesgue measure, and the embedding $\fB(X) \hookrightarrow \fB(X\times Y)$ is induced by the projection $X\times Y \twoheadrightarrow X$.

  Since $ARV$ and $APr$ are biïnterpretable, SFB follows for $ARV$.
  Alternatively, the same argument holds for $ARV$, where atomlessness of $\sigma(M)$ over $\sigma(P)$ is expressed by
  \begin{gather*}
    \sup_x \, \inf_y \, \sup_{z \in P} \, \left| \half E(x \wedge z) - E(x \wedge y \wedge z) \right| = 0.
  \end{gather*}
\end{proof}

\begin{thm}
  \label{thm:StrongConvergence}
  Let $A$ be a set of parameters, and identify $\tS_n(A)$ with $\fD_n(\sigma(A))$ as above.
  Then the topologies of $d$-convergence, $\Cb$-convergence (of types) and strong convergence (of distributions) agree.
\end{thm}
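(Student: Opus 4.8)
The plan is to run the implications $d$-convergence $\Rightarrow$ $\Cb$-convergence $\Rightarrow$ strong convergence $\Rightarrow$ $d$-convergence. The first is \fref{lem:TypeConv}, valid in any stable theory. The other two rest on one observation: identifying $p = \tp(\bar X/A)$ with $\vec\mu = \dist(\bar X|\sigma(A))$ as in \fref{thm:TypesAreCondDist}, each conditional moment $\int \bar x^\alpha\, dp = \condexp{\bar X^\alpha}{\sigma(A)}$ is an element of $L_1(\sigma(A),[0,1])$, and both $\Cb$-convergence and strong convergence can be read off these moments.

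More precisely, I would show that each of ``$p_n \to^{\Cb} p$'' and ``$\vec\mu_n \to \vec\mu$ strongly'' is equivalent to the statement that $\int \bar x^\alpha\, dp_n \to \int \bar x^\alpha\, dp$ in $L_1(\sigma(A))$ for every $\alpha \in \bN^n$. For strong convergence this is the definition, once one invokes the reduction to monomial test functions (Stone--Weierstrass) noted after \fref{thm:TypesAreCondDist}. For $\Cb$-convergence, note first that every type over a set in $ARV$ is stationary --- the conditional distribution determines the type, and since non-forking is probabilistic independence it extends uniquely to any larger algebra --- so $p_n \to^{\Cb} p$ is literally the condition that $\sup_{\bar b}|\varphi(\bar x,\bar b)^{p_n} - \varphi(\bar x,\bar b)^p| \to 0$ for every formula $\varphi(\bar x,\bar y)$. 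By the Quantifier Elimination to Moments corollary each such $\varphi$ is a uniform limit of finite continuous combinations of the predicates $E[\bar x^\alpha\bar y^\gamma]$, and $E[\bar x^\alpha\bar y^\gamma]^p$ evaluated at a parameter $\bar b$ equals $\bE[\bar b^\gamma\int\bar x^\alpha\,dp] = \bE[\condexp{\bar b^\gamma}{\sigma(A)}\int\bar x^\alpha\,dp]$, where $\condexp{\bar b^\gamma}{\sigma(A)}$ is $[0,1]$-valued. Hence convergence of all moments in $L_1$ forces each predicate $E[\bar x^\alpha\bar y^\gamma]$ to converge uniformly over $\bar b$, and a $3\varepsilon$ argument through the uniform approximation yields $\Cb$-convergence. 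Conversely, applying $\Cb$-convergence to $\varphi(\bar x,y) = E[y\bar x^\alpha]$ gives $\sup_{b}|\bE[b(\int\bar x^\alpha\,dp_n - \int\bar x^\alpha\,dp)]| \to 0$ over $[0,1]$-valued $b$; taking $\condexp{b}{\sigma(A)}$ to be the indicator of $\{\int\bar x^\alpha\,dp_n \ge \int\bar x^\alpha\,dp\}$, and of its complement, recovers $\|\int\bar x^\alpha\,dp_n - \int\bar x^\alpha\,dp\|_1 \to 0$.

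The implication $\Cb$-convergence $\Rightarrow$ $d$-convergence is where the one genuinely non-formal ingredient enters: that $ARV$ is SFB, already obtained from the $\aleph_0$-categoricity of $ARV_P$ via \fref{thm:SFBEqOCatPairs}. Fixing a model $M \models ARV$ with $A \subseteq M$, the (stationary) types $p_n, p$ have unique non-forking extensions $\tilde p_n, \tilde p$ over $M$ with the same canonical bases, so $\tilde p_n \to^{\Cb} \tilde p$; by SFB $\tilde p_n \to^d \tilde p$, witnessed by $\bar a_n \models \tilde p_n$ and $\bar a \models \tilde p$ with $\bar a_n \to \bar a$, and these realisations also witness $p_n \to^d p$. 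This closes the cycle. I do not expect a serious obstacle here, since all the depth is packaged inside the SFB property of $ARV$, which we are entitled to use; the only thing requiring care is the moment translation of the middle paragraph, where auxiliary parameters drawn from the monster model must be projected onto $\sigma(A)$ by conditional expectation before the $L_1$ estimates apply.
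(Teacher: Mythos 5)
Your proof is correct and follows essentially the same route as the paper: the key direction rests on $ARV$ being SFB (via \fref{thm:SFBEqOCatPairs}), and the link between strong convergence and $\Cb$-convergence is made through the conditional moments $\bE[\bar X^\alpha|\sigma(A)]$ serving as canonical parameters for the $E[\bar x^\alpha\bar y^\beta]$-definitions, exactly as in the paper's argument. The only difference is cosmetic: the paper quotes SFB up front to identify $\cT_d$ with $\cT_{\Cb}$ and then closes the cycle via $d\Rightarrow\text{strong}\Rightarrow\Cb$, whereas you prove $\Cb\Leftrightarrow\text{strong}$ directly and invoke SFB only for $\Cb\Rightarrow d$.
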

\begin{proof}
  By \autoref{thm:StrongConvergenceIncomplete} and \autoref{cor:ProbAlgSFB}.
\end{proof}

Both of the examples above are $\aleph_0$-categorical and $\aleph_0$-stable, so it is natural to expect them to satisfy some continuous analogue of one-basedness.
It is not difficult to verify that none of them is literally one-based.
In fact, no known continuous stable theory is one based, except for those constructed trivially from classical ones.
Given the examples above, and in analogy with \autoref{prp:ZilberEquivs}, it stands to reason to contend that at least for $\aleph_0$-categorical theories, SFB is the correct continuous logic analogue of a classical one-based theory, and one may further formalise it as a conjecture:

\begin{conj}[Zilber's Theorem for continuous logic, naïve version]
  Every $\aleph_0$-categorical $\aleph_0$-stable theory is SFB.
\end{conj}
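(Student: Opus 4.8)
The plan is to attack the conjecture through \fref{thm:SFBEqOCatPairs}, which reduces SFB (together with $\aleph_0$-categoricity) of $T$ to the $\aleph_0$-categoricity of the pair theory $T_P$. Thus, assuming $T$ is $\aleph_0$-categorical and $\aleph_0$-stable, it suffices to prove that $T_P$ is $\aleph_0$-categorical, and by the metric Ryll-Nardzewski theorem this amounts to showing that the logic topology and the metric topology agree on each $\tS_m(T_P)$. So the whole burden is to upgrade a logic-topological statement to a metric one.

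The first move is to set up the comparison via the canonical base map. Since $T$ is $\aleph_0$-categorical, the class of beautiful pairs is almost elementary (\fref{prp:oCatPairsExist}), so by \fref{prp:TPTypes} the map $\hat\theta\colon \tS_m(T_P) \to \tS_{\Cb(m)}(T)$, $\tp^{\cL_P}(\bar a) \mapsto \tp^\cL\bigl(\Cb(\bar a/P)\bigr)$, is a homeomorphic embedding for the logic topologies. Hence the logic topology on $\tS_m(T_P)$ is precisely the initial topology pulled back along $\hat\theta$ from the logic topology on $\tS_{\Cb(m)}(T)$. Because $T$ is $\aleph_0$-categorical, Ryll-Nardzewski also gives that the logic and metric topologies on $\tS_{\Cb(m)}(T)$ coincide. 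Combining these, the logic topology on $\tS_m(T_P)$ is already the pullback along $\hat\theta$ of the \emph{metric} topology on canonical bases.

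The decisive step is then to identify this pulled-back metric topology with the genuine metric topology on $\tS_m(T_P)$, i.e. to show that $\hat\theta$ is not merely a logic embedding but a metric homeomorphism onto its image. One direction is routine: $\Cb(\bar a/P)$ is uniformly definable from $\bar a$ (\fref{lem:PCbDef}), so $\hat\theta$ is metrically (uniformly) continuous, giving that small $d$ on pair types forces small $d$ on canonical bases. What must be proved is the reverse uniform bound — for every $\varepsilon>0$ a $\delta>0$ such that whenever $\bar a,\bar b$ lie in beautiful pairs with $d\bigl(\Cb(\bar a/P),\Cb(\bar b/P)\bigr)<\delta$, one can move $\bar a$ within $\varepsilon$ to some $\bar a'$ with $\tp^{\cL_P}(\bar a')=\tp^{\cL_P}(\bar b)$. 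This is exactly the uniform form of SFB extracted inside the proof of \fref{thm:SFBEqOCatPairs}, and it is here that $\aleph_0$-stability must do genuine work. The natural route is a metric analogue of finite basedness (compare item~(4) of \fref{prp:ZilberEquivs}): I would try to show that $\Cb(\bar a/P)$ is, up to arbitrarily small $\Cb$-metric error and \emph{uniformly} in the pair, recoverable from boundedly many members of a Morley sequence in $\tp(\bar a/P)$. Such a uniform finite-dimensional approximation would yield the equicontinuity needed to invert $\hat\theta$ metrically and close the argument.

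The hard part will be precisely this uniform metric approximation. In classical logic it is supplied by Zilber's theorem through one-basedness and honest finite basedness (\fref{fct:Zilber}), after which the type space of pairs is literally finite; but in the metric setting $\aleph_0$-stability only controls types formula by formula and gives no a priori uniform \emph{rate} at which a Morley sequence pins down its canonical base in the $\Cb$-metric. The whole conjecture turns on whether that rate can be made uniform, rather than degrading as the demanded precision increases; extracting such a uniform bound from $\aleph_0$-stability alone — or finding the structural substitute for one-basedness that forces it — is the single point on which I expect any proof to stand or fall.
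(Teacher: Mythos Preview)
The statement you are trying to prove is a \emph{conjecture} which the paper immediately refutes. Right after stating it, the authors write ``Unfortunately, this conjecture has an easy counterexample'' and exhibit the theory $ALpL$ of atomless $L_p$ Banach lattices: it is $\aleph_0$-categorical and $\aleph_0$-stable, yet $ALpL_P$ has exactly two non-isomorphic separable models, so by \fref{thm:SFBEqOCatPairs} $ALpL$ is not SFB. Your proof plan is therefore aimed at a false target.

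That said, your diagnosis of where the difficulty lies is accurate and matches the paper's own analysis. You correctly reduce everything to the uniform metric invertibility of $\hat\theta$, and you correctly isolate the missing ingredient as a uniform rate at which Morley sequences approximate canonical bases --- a metric analogue of finite basedness. The $ALpL$ example shows precisely that $\aleph_0$-stability does \emph{not} supply this rate: the obstruction is the second separable model of $ALpL_P$, in which the small model has a complement of positive measure, and no amount of stability prevents this. The paper's response is not to patch the argument but to weaken the conclusion: they show $ALpL_P$ is $\aleph_0$-categorical \emph{up to small perturbations of $P$}, and reformulate the conjecture accordingly. So the ``single point on which any proof stands or falls'' that you identified is exactly where it falls.
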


Unfortunately, this conjecture has an easy counterexample:

\begin{exm}
  The theory $ALpL$ of atomless $L^p$ Banach lattices for $p \in [1,\infty)$ (see \cite{BenYaacov-Berenstein-Henson:LpBanachLattices}) is not SFB.
  This has already been observed in \autoref{exm:LpNotSFB} using results of \cite{BenYaacov:UniformCanonicalBases}.
  This can also be observed using our criterion, as follows.

  A model of $ALpL_P$ is of the form $\bigl( L^p(X,\fB_X,\mu_X), L^p(Y,\fB_Y,\mu_Y) \bigr)$, where $\fB_Y \subseteq \fB_X$ (so in particular $Y \subseteq X$) and $\mu_Y = \mu_X\rest_{\fB_Y}$, such that in addition $\mu_Y$ is atomless and $\mu_X$ is atomless over $\fB_Y$.
  The theory $ALpL_P$ has precisely two non isomorphic separable models, one where $Y = X$ and the other where $\mu(X \setminus Y) > 0$.

  We may construct them explicitly as $\bigl( L^p(X\times Y),L^p(X) \bigr)$ and $\bigl( L^p(Z\times Y),L^p(X) \bigr)$, where $X=Y = [0,1] \subseteq Z = [0,2]$ are equipped with the Lebesgue measure, the embedding $L^p(X) \subseteq L^p(X \times Y)$ is given by $f'(x,y) = f(x)$ and $L^p(X \times Y) \subseteq L^p(Z \times Y)$ is given by $f'(w) = f(w)$ for $w \in X \times Y$, $f'(w) = 0$ otherwise.
\end{exm}

It is worthwhile to point out that this last example is disturbing on several other ``counts'':
\begin{itemize}
\item It is a counter-example for Vaught's no-two-models theorem in continuous logic.
\item Since $ALpL$ is $\aleph_0$-stable, $ALpL_P$ is superstable by \cite{BenYaacov:SuperSimpleLovelyPairs}, and we get a counter-example to Lachlan's theorem on the number of countable models of a first order superstable theory.
\end{itemize}

Nonetheless, one may still hope to recover a version of Zilber's Theorem for continuous logic using the notion of perturbations of metric structures (as introduced in \cite{BenYaacov:Perturbations,BenYaacov:TopometricSpacesAndPerturbations}).
Natural considerations suggest that whenever adding symbols to a language (especially to the language of an $\aleph_0$-categorical theory) one should also study the expanded structures up to arbitrarily small perturbations of the new symbol.
Thus, the question should not be whether $ALpL_P$ is $\aleph_0$-categorical, but rather, whether it is $\aleph_0$-categorical up to small perturbations of the predicate $P$
(the positive non-perturbed results for $IHS_P$ and $APr_P$ should be viewed witnessing the exceptional structural simplicity of these theories).

\begin{prp}
  The theory $ALpL_P$ is $\aleph_0$-categorical up to arbitrarily small perturbations of $P$.
\end{prp}
\begin{proof}
  We need to show that if $(M,P),(N,P) \vDash ALpL_P$ are separable then there exists an isomorphism $\rho\colon M \to N$ such that $|d(f,P) - d(\rho(f),P)| < \varepsilon$ for all $f \in M$.
  Since $ALpL_P$ has precisely two non-isomorphic separable models, it will suffice to show this for those two models.

  Let $N$, $M_1$ and $M_2$ be the closed unit balls of $L^p([0,1])$, $L^p([0,1] \times [0,1])$ and $L^p([0,2]\times[0,1])$, respectively (with the Lebesgue measure).
  As in the example above, we consider that $N \subseteq M_1 \subseteq M_2$.
  In particular, $N$ is the set of all $g \in M_1$ such that the value of $g(x,y)$ depends only on $x$.
  Then the two non-isomorphic models are $(M_1,N)$ and $(M_2,N)$.

  Define $\rho_1\colon L^p([0,1]\times[0,1]) \to L^p([0,1]\times[\varepsilon,1])$ and $\rho_2\colon L^p([1,2]\times[0,1]) \to L^p([0,1]\times[0,\varepsilon])$ by:
  \begin{align*}
    (\rho_1f)(x,y) & = (1-\varepsilon)^{-1/p} f\bigl( x,(y-\varepsilon)/(1-\varepsilon) \bigr)
    \\
    (\rho_2f)(x,y) & = \varepsilon^{-1/p}f(x+1,y/\varepsilon).
  \end{align*}
  Then $\rho_1$ and $\rho_2$ are isomorphisms of Banach lattices, which can be combined into an isomorphism $\rho = \rho_1\oplus \rho_2 \colon L^p([0,2]\times[0,1]) \to L^p([0,1]\times[0,1])$.
  This restricts to an isomorphism of the unit balls which will also be denoted by $\rho\colon M_2 \to M_1$.
  Let also $D = [0,1] \times [0,\varepsilon]$ and $E = [0,1] \times [\varepsilon,1]$, namely the supports of the images of $\rho_2$ and $\rho_1$, respectively.

  We claim that $\rho\rest_N\colon N \to M_1$ is not too far from the identity.
  Indeed, let $g \in N$.
  Then $\|g\| \leq 1$, and we can write it as a function of the first coordinate $g(x)$.
  Then $\rho(g) = \rho_1(g)$ can be written as $(1-\varepsilon)^{-1/p}g(x)\chi_E(x,y)$.
  For $r \in [0,1]$ let:
  \begin{gather*}
    \zeta(r) = 1 - (1-r)^{1/p} + r^{1/p}.
  \end{gather*}
  Then:
  \begin{align*}
    \|g-\rho(g)\|
    &
    \leq \|g\chi_E - (1-\varepsilon)^{-1/p}g\chi_E\| + \|g\chi_D\|
    \\ &
    = \left((1-\varepsilon)^{-1/p}-1\right)\|g\chi_E\| + \|g\|\varepsilon^{1/p}
    \\ &
    = \|g\|\left((1-\varepsilon)^{-1/p}-1\right)(1-\varepsilon)^{1/p} + \|g\|\varepsilon^{1/p}
    \\ &
    = \|g\|\zeta(\varepsilon) \leq \zeta(\varepsilon).
  \end{align*}
  Now let $f \in M_2$.
  Then:
  \begin{align*}
    \bigl| \|f-g\| - \|\rho(f)-g\| \bigr|
    &
    = \bigl| \|\rho(f)-\rho(g)\| - \|\rho(f)-g\| \bigr|
    \\ &
    \leq \|g-\rho(g)\| \leq \zeta(\varepsilon).
  \end{align*}
  Fixing $f \in M_2$ while letting $g \in N$ vary, we conclude that:
  \begin{align*}
    \bigl| d(\rho f,P)^{(M_1,N)} - d(f,P)^{(M_2,N)} \bigr|
    &
    = \bigl| d(f,N) - d(\rho(f),N) \bigr|
    \\ &
    \leq \sup_{g\in N} \bigl| \|f-g\| - \|\rho(f)-g\| \bigr|
    \leq \zeta(\varepsilon).
  \end{align*}
  Since $\zeta$ is continuous and $\zeta(0) = 0$, by taking $\varepsilon > 0$ small enough we can get $\rho\colon (M_2,N) \to (M_1,N)$ to be as small a perturbation of the predicate $P(x) = d(x,N)$ as we wish.
\end{proof}

We therefore propose the following:

\begin{conj}[Zilber's Theorem for continuous logic]
  Whenever $T$ is an $\aleph_0$-categorical $\aleph_0$-stable theory (in a countable language) $T_P$ is $\aleph_0$-categorical up to arbitrarily small perturbations of the predicate $P$.
\end{conj}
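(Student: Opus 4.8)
The plan is to reduce the conjecture, via the perturbation version of the Ryll--Nardzewski theorem (see \cite{BenYaacov:Perturbations,BenYaacov:TopometricSpacesAndPerturbations}), to a perturbed form of SFB, and then to try to extract that perturbed form from $\aleph_0$-stability. Concretely, $T_P$ is $\aleph_0$-categorical up to arbitrarily small perturbations of $P$ precisely when, for each $m$, the logic topology on $\tS_m(T_P)$ coincides with the perturbation-metric topology $\cT_d^{\mathrm{Pert}}$, whose basic neighbourhoods of $p$ consist of those $q$ admitting a realisation $d$-close to a realisation of $p$ \emph{after an arbitrarily small perturbation of the predicate $P$}. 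So it suffices to show: if $p_n \to p$ in $\tS_m(T_P)$ then $p_n \to p$ in $\cT_d^{\mathrm{Pert}}$. This is exactly the statement of \fref{thm:SFBEqOCatPairs} with ``metric'' replaced throughout by ``metric up to perturbation of $P$''.

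First I would run the machinery of \fref{sec:SFB} in parallel with the proof of \fref{thm:SFBEqOCatPairs}. Since $T$ is $\aleph_0$-categorical, the class of beautiful pairs is almost elementary (\fref{prp:oCatPairsExist}) and $\hat\theta\colon \tS_m(T_P) \hookrightarrow \tS_{\Cb(m)}(T)$ is a homeomorphic embedding (\fref{prp:TPTypes}); hence $p_n \to p$ gives $\hat\theta(p_n) \to \hat\theta(p)$, and since $T$ is $\aleph_0$-categorical the logic and metric topologies on $\tS_{\Cb(m)}(T)$ agree, so $\hat\theta(p_n) \to^d \hat\theta(p)$. Translating back as in that proof, in a suitably saturated $N \models T$ one obtains types $q_n, q \in \tS_m(N)$ with $q_n \to^{\Cb} q$, together with realisations $a_n \models q_n$, $a \models q$ in a beautiful pair $(M,P)$ with $P(M)=N$, such that $p_n = \tp^{\cL_P}(a_n)$ and $p = \tp^{\cL_P}(a)$. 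Without SFB we cannot conclude $a_n \to a$; the content of the conjecture is that $\aleph_0$-stability lets us absorb the discrepancy into a small perturbation of $P$, i.e.\ of the submodel $N$.

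The heart of the matter is thus a perturbed SFB statement: in an $\aleph_0$-categorical, $\aleph_0$-stable $T$, if $q_n \to^{\Cb} q$ over a model $N$ then, after an arbitrarily small perturbation moving $N$ to a nearby elementary submodel $N'$, the types converge metrically. I would attack this by a Morley-sequence argument paralleling \fref{lem:AILim} and the superstable direction of \fref{thm:GenBR}: $\Cb(q)$ together with a Morley sequence in $q\rest_{\Cb(q)}$ nearly recovers $q$, and the superstability of $T_P$ (\cite{BenYaacov:SuperSimpleLovelyPairs}) provides, for each finite $\bar b \subseteq N$ and each $\varepsilon$, a finite initial segment of such a Morley sequence over which $\bar b$ is $\varepsilon$-independent from the next term. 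The aim is to turn the convergence $\Cb(q_n) \to \Cb(q)$, together with this approximate independence, into an explicit small perturbation of $P$ --- mirroring what was done by hand for $ALpL$, where the perturbation $\rho$ was assembled from measure-space rescalings of controlled size $\zeta(\varepsilon)$.

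The main obstacle, and the reason the statement is only conjectural, is quantitative and uniform: a compactness argument readily produces \emph{some} perturbation of $P$ realising the required matching, but not one of controlled size, whereas the $ALpL$ example shows the perturbation is genuinely unavoidable and its size is governed by a non-trivial modulus. A proof seems to require either (i) a structural classification of $\aleph_0$-categorical, $\aleph_0$-stable continuous theories refining Zilber's analysis, with a continuous-logic analogue of one-basedness robust under naming a predicate, so that the perturbations can be built explicitly as in the $L_p$ case; or (ii) a soft argument establishing a uniform continuity estimate between the metric on $\tS_{\Cb(m)}(T)$ and the perturbation-metric on $\tS_m(T_P)$, using only the superstability of $T_P$ and the embedding $\hat\theta$. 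I expect route (ii) to be the one whose failure is most transparent on $ALpL$ --- it is exactly there that the estimate forces a perturbation rather than an equality --- and route (i) to be the more likely to succeed, at the cost of substantial new structure theory.
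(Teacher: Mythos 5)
The statement you are addressing is recorded in the paper as a \emph{conjecture}: no proof is given there, and your proposal does not supply one either. Your reduction of the conjecture, via a perturbed Ryll--Nardzewski theorem, to a ``perturbed SFB'' statement --- namely that in an $\aleph_0$-categorical, $\aleph_0$-stable theory, $\Cb$-convergence of types over a model $N$ implies metric convergence after an arbitrarily small perturbation of $N$ --- is a sensible framing, and your parallel run of the machinery of \fref{prp:oCatPairsExist} and \fref{prp:TPTypes} correctly reproduces the corresponding half of the proof of \fref{thm:SFBEqOCatPairs}. But the entire mathematical content of the conjecture is concentrated in that perturbed SFB step, and for it you offer only a programme (``I would attack this by a Morley-sequence argument\ldots''), followed by your own accurate admission that neither of your two routes is carried out. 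In particular, superstability of $T_P$ gives approximate independence of a fixed finite tuple from the tail of a Morley sequence, but you give no mechanism for converting these finite-tuple-by-finite-tuple adjustments into a single global elementary self-map of the pair that moves the predicate $P$ by a uniformly small amount; this is exactly the point at which the $ALpL$ example had to be treated by an explicit hand-built isomorphism with modulus $\zeta(\varepsilon)$, and nothing in your sketch replaces that computation in general.

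So the gap is concrete: the key lemma (perturbed SFB, equivalently the claim that logic convergence in $\tS_m(T_P)$ implies convergence in your perturbation-metric topology) is named as a target but never established, and the paper itself leaves the statement open precisely because no such argument is known. Your diagnosis of why the obstacle is quantitative, and your two candidate strategies, are reasonable as a research plan, but they should not be presented as a proof of the statement.
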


\section{Almost indiscernible sequences and sub-sequences}
\label{sec:AlmostIndiscernible}

One of the questions studied by Berkes \& Rosenthal \cite{Berkes-Rosenthal:AlmostExchangeableSequences} is when a sequence of random variables possesses an almost exchangeable sub-sequence.
In this section we address the corresponding model-theoretic question, namely, when a sequence of tuples possesses an almost indiscernible sub-sequence.
For simplicity of notation, we only consider (sequences of) singletons, but the everything we prove holds just as well for arbitrary tuples.

\begin{dfn}
  \label{dfn:AlmostIndiscernible}
  A sequence $(a_n)_{n\in\bN}$ is \emph{almost indiscernible} if there exists (possibly in an elementary extension) an indiscernible sequence $(b_n)_{n\in\bN}$ in the same sort such that $d(a_n,b_n) \to 0$.
\end{dfn}

\begin{lem}
  \label{lem:AILim}
  Let $(a_n)_{n\in\bN}$ be an almost indiscernible sequence, say witnessed by an indiscernible sequence $(b_n)_{n\in\bN}$, and let $B \supseteq (a_n)_n$.
  Then $p = \lim \tp(a_n/B)$ exists and is stationary, and $\Cb(p) \subseteq \dcl(a_n)_n$.
  Moreover, $(b_n)_{n\in\bN}$ is a Morley sequence in $p\rest_{\Cb(p)}$.
\end{lem}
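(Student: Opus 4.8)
The plan is to work with the indiscernible sequence $(b_n)$ witnessing almost indiscernibility and transfer convergence statements between $(a_n)$ and $(b_n)$ using $d(a_n,b_n)\to 0$. First I would recall the standard fact that an indiscernible sequence in a stable theory is totally indiscernible (a Morley sequence over the set $E$ of its EM-type's canonical base), so $q_0 := \lim\tp(b_n/E)$ exists, is stationary with $\Cb(q_0)=E\subseteq\dcl(b_n)_n$, and $(b_n)$ is a Morley sequence in $q_0$. The key point is that for any formula $\varphi(\bar x,\bar y)$ and parameters $\bar c$, the value $\varphi(b_n,\bar c)$ converges (this is where stability and indiscernibility are used — by the finite cover / stable-independence behaviour of indiscernible sequences, $\tp(b_n/\bar c)$ converges for every fixed $\bar c$), and hence by uniform continuity of $\varphi$ and $d(a_n,b_n)\to 0$, $\varphi(a_n,\bar c)$ converges to the same limit; this gives the existence of $p=\lim\tp(a_n/B)$ for any $B\supseteq(a_n)_n$, since every formula over $B$ is of this form.

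Next I would identify the limit type and its canonical base. For a fixed $\varphi$, the $\varphi$-definition of $\tp(b_n/E)$ stabilises, and by \fref{lem:TypeConv} (the implication from $d$-convergence to $\Cb$-convergence, applied to the pair $(a_n)$ and $(b_n)$, which $d$-converge to a common limit via $d(a_n,b_n)\to 0$) the canonical bases agree in the limit, i.e.\ $\Cb(p)=\Cb(q_0)=E$. To see that $E\subseteq\dcl(a_n)_n$ rather than merely $\dcl(b_n)_n$, I would use that $\Cb_\varphi(p)$ is the canonical parameter of $\lim_n d_{\bar x}\varphi(\bar y,\cdot)$ computed along $(b_n)$, and that this same limit definition is obtained along $(a_n)$ (again by uniform continuity and $d(a_n,b_n)\to 0$ applied to the uniform $\varphi$-definition $d_{\bar x}\varphi(\bar y,Z)$ evaluated on initial segments); hence $\Cb_\varphi(p)$ is a definable function of $(a_n)_n$. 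Stationarity of $p$ follows since $p$ is a limit of the Morley-convergent sequence and its definition over $\Cb(p)=E$ is the stationary $q_0$.

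Finally, for the "moreover" clause: since $d(a_n,b_n)\to 0$ and $(b_n)$ is a Morley sequence in $q_0=p\rest_E$, and $E=\Cb(p)\subseteq\dcl(a_n)_n\subseteq\dcl(B)$, the sequence $(b_n)$ is a sequence of realisations of $p\rest_{\Cb(p)}$ (each $b_n$ realises $q_0$, being indiscernible and hence a Morley sequence) that is moreover independent over $E$ with the right type — so it is literally a Morley sequence in $p\rest_{\Cb(p)}$. I expect the main obstacle to be the careful bookkeeping in the second paragraph: verifying that $\Cb_\varphi(p)$, a priori only in $\dcl(b_n)_n$, actually lies in $\dcl(a_n)_n$, which requires writing $\Cb_\varphi(p)$ as the limit of the finitary pieces $d_{\bar x}\varphi(\bar y, a_0\cdots a_k)$ and controlling this limit uniformly in $\bar y$ using the metric on $S_{\Cb_\varphi}$; everything else is a fairly direct application of \fref{lem:TypeConv} and the total indiscernibility of stable indiscernible sequences.
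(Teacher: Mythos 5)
Your overall skeleton matches the paper's: reduce to the witnessing indiscernible sequence $(b_n)$, invoke the standard stability fact that $r = \lim \tp\bigl(b_n/B\cup\{b_n\}_n\bigr)$ exists, is stationary, and has $(b_n)$ as a Morley sequence in $r\rest_{\Cb(r)}$, and then transfer everything to $(a_n)$ using $d(a_n,b_n)\to 0$. But the key step --- showing $\Cb(p) \subseteq \dcl(a_n)_n$ rather than merely $\dcl(b_n)_n$ --- is where your argument, as written, would fail. You propose to realise $\Cb_\varphi(p)$ as the limit of definitions computed along \emph{initial segments} $a_0\cdots a_k$. This is the wrong end of the sequence: the $\varphi$-definition of $r$ is $d_{\bar x}\varphi\bigl(\bar y,(b_n)_{n\geq k}\bigr)$ for any \emph{tail} of the Morley sequence, and the metric on the infinite-tuple sort is the weighted supremum $\bigvee_n d(z_n,z_n')/2^n$, which is dominated by the early coordinates --- exactly where $a_n$ and $b_n$ may be far apart. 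So plugging $(a_n)_n$ (or an initial-segment approximation of it) into the definition schema need not yield anything close to the definition of $r$. What does work, and is what the paper does, is to use tails: since $\sup_{n\geq k} d(a_n,b_n) \to 0$, the tuples $(a_n)_{n\geq k}$ and $(b_n)_{n\geq k}$ become arbitrarily close, so any automorphism fixing $(a_n)_n$ moves the tail $(b_n)_{n\geq k}$ arbitrarily little (by at most $2\sup_{n\geq k}d(a_n,b_n)$) and hence fixes $\Cb_\varphi(r)$, which is the canonical parameter of $d_{\bar x}\varphi\bigl(\bar y,(b_n)_{n\geq k}\bigr)$ for every $k$. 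Your approach is repairable along these lines, but the repair \emph{is} the content of the step.

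A second, smaller error: you justify $\Cb(p)=\Cb(q_0)$ by applying \fref{lem:TypeConv} to sequences that ``$d$-converge to a common limit''. Neither $\tp(a_n/B)$ nor $\tp(b_n/B)$ $d$-converges in general when $B \supseteq (a_n)_n$: the limit type of an indiscernible sequence over a set containing it is typically at positive $d$-distance from every $\tp(b_n/B)$ (consider an i.i.d.\ sequence in $ARV$). Only logic convergence holds, obtained from the indiscernibility of $(b_n)$ and transferred to $(a_n)$ by uniform continuity of formulas. Fortunately that is all you need at this point, since it already gives $\lim\tp(a_n/B) = \lim\tp(b_n/B) = r\rest_B$ as an equality of types; stationarity of this restriction and the identification $\Cb(p)=\Cb(r)$ then follow from $\Cb(r)\subseteq\dcl(a_n)_n\subseteq\dcl(B)$, i.e., from the step above, not from any convergence of canonical bases.
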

\begin{proof}
  Let $B' = B\cup\{b_n\}_n$.
  Since $T$ is stable, $r = \lim \tp(b_n/B')$ exists and is stationary, and $(b_n)_n$ is a Morley sequence in $r\rest_{\Cb(r)}$.
  By \cite{BenYaacov:StableGroups} (Lemma~4.2 and the discussion following Proposition~5.2), $\Cb(r)$ can uniformly recovered from any Morley sequence in $r$.
  Consider now an automorphism of an ambient monster model which fixes $(a_n)_n$.
  For $k$ large enough, it will move the tail $(b_n)_{n\geq k}$, and therefore $\Cb(r)$, as little as we wish.
  Therefore $\Cb(r) \subseteq \dcl(a_n)_n \subseteq \dcl(B)$.

  Clearly $\lim \tp(a_n/B) = \lim \tp(b_n/B) = r\rest_B$, so in particular the first limit exists, call it $p$.
  Since $\Cb(r) \subseteq B$, the type $p$ is stationary, $\Cb(p) = \Cb(r) \subseteq \dcl(a_n)_n$.
  Finally, $r\rest_{\Cb(r)} = p\rest_{\Cb(p)}$.
\end{proof}

In a discrete sort, an almost indiscernible sequence is just one which is eventually indiscernible, so having an almost indiscernible sub-sequence is the same as having an indiscernible sub-sequence.
In metric sorts, however, the two notions may differ and it is the weaker one (namely, having an almost indiscernible sub-sequence) which we shall study.

\begin{dfn}
  \label{dfn:PropertyStar}
  Let $B$ be a set containing a sequence $(a_n)_{n\in\bN}$.
  We say that $(a_n)_n$ satisfies $(*_B)$ if $p = \lim \tp(a_n/B)$ exists and is stationary, and for $C = \Cb(p)$ and $c \vDash p$ we have:
  \begin{gather*}
    \tp(Ba_n/C) \to^d \tp(Bc/C)
  \end{gather*}
  If $B = \{a_n\}_n$ we omit it and say that $(a_n)_n$ satisfies $(*)$.
\end{dfn}

Notice that property $(*_B)$ lies between convergence of $\tp(a_n/B)$ in the logic topology and convergence in $d$.
Convergence in $d$ would just mean that the sequence $(a_n)_n$ converges (in fact, canonical base convergence would imply the same, when applicable, by \autoref{rmk:CbConvergenceDistanceToModel}), and is therefore too restrictive for our purposes.
On the other hand, convergence in the logic topology alone is too weak: if $(a_n)$ is an arbitrary sequence contained, say, in a separable $B$, then by compactness there exists a sub-sequence such that $\tp(a_{n_k}/B)$ converges, so this kind of hypothesis tells us essentially nothing about the sequence -- and using stability, one may remove the separability assumption (in a countable language).
With \autoref{dfn:PropertyStar}, however, we can prove:

\begin{thm}
  \label{thm:GenBR}
  If $T$ is stable and the sequence $(a_n)_{n\in\bN} \subseteq B$ has a sub-sequence satisfying $(*_B)$ then $(a_n)_{n\in\bN}$ also has an almost indiscernible sub-sequence.
  If $T$ is superstable then the converse holds as well.

  Moreover, if in addition $q = \lim \tp(a_n/B)$ exists then it is stationary and the sequence witnessing almost indiscernibility is Morley over $\Cb(q)$.
\end{thm}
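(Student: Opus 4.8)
The plan is to prove the two directions separately, with the forward direction being the substantive one.

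For the forward direction, assume without loss of generality (after passing to the given sub-sequence) that $(a_n)_n$ itself satisfies $(*_B)$, so $p = \lim\tp(a_n/B)$ is stationary, $C = \Cb(p) \subseteq \dcl(a_n)_n \subseteq B$, and $\tp(Ba_n/C) \to^d \tp(Bc/C)$ for $c \models p$. Fix a Morley sequence $(c_k)_{k\in\bN}$ in $p$ over $C$; this is indiscernible over $C$, hence over $\Cb(p)$, and I want to produce a sub-sequence $(a_{n_k})_k$ with $d(a_{n_k}, c_k) \to 0$ (possibly after replacing the $c_k$ by a conjugate tuple, or working inside a suitable monster model). The point is that $d$-convergence $\tp(Ba_n/C)\to^d\tp(Bc/C)$ means we can realise these types with $Ba_n$-realisations converging coordinatewise to a realisation of $\tp(Bc/C)$; the issue is that this gives one $a_n$ close to one copy of $c$, not a whole sub-sequence simultaneously close to a Morley sequence. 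To fix this I would build the sub-sequence recursively: having chosen $n_0 < \dots < n_{k-1}$ and located $c_0,\dots,c_{k-1}$ with $a_{n_i}$ within $2^{-i}$ of $c_i$, use stationarity of $p$ and the fact that $c_0 \dots c_{k-1}$ is $C$-independent together with the non-splitting/definability of $p$ to find a realisation $c_k$ of the appropriate Morley extension, and then use $\tp(Ba_n/C) \to^d \tp(Bc/C)$ — applied to the extended context $C c_0\cdots c_{k-1}$, which still lies in $\dcl(B)$ up to the relevant distance — to pick $n_k > n_{k-1}$ with $d(a_{n_k}, c_k) < 2^{-k}$. Some care is needed because $d$-convergence is stated over $C$ rather than over $Cc_0\cdots c_{k-1}$; here I would invoke \fref{lem:AILim}-style reasoning, or more directly the stationarity of $p$, to transfer the approximation. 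The resulting $(c_k)_k$ is then Morley in $p\rest_{\Cb(p)}$ by construction, and $d(a_{n_k}, c_k)\to 0$, giving almost indiscernibility of the sub-sequence.

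For the converse, assume $T$ is superstable and $(a_n)_n$ has an almost indiscernible sub-sequence, witnessed by an indiscernible $(b_n)_n$ with $d(a_n,b_n)\to 0$ (reindexing so this sub-sequence is all of $(a_n)_n$). By \fref{lem:AILim}, $p = \lim\tp(a_n/B)$ exists, is stationary, $C = \Cb(p) \subseteq \dcl(a_n)_n$, and $(b_n)_n$ is Morley in $p\rest_{\Cb(p)}$. It remains to verify $(*_B)$, i.e. that $\tp(Ba_n/C) \to^d \tp(Bc/C)$ for $c \models p$. Since $(b_n)_n$ is a Morley sequence in $p\rest_C$, each $b_n \models p\rest_C$, so $\tp(b_n/C) = \tp(c/C)$; superstability is what lets us upgrade this to $d$-convergence of the types over $B$ restricted to $C$ — the standard fact that in a superstable theory the canonical base of a stationary type over a set is contained in the algebraic (here, definable) closure of a finite part of a Morley sequence, so that $\tp(Bb_n/C)$ stabilises and $d(a_n,b_n)\to 0$ pushes $\tp(Ba_n/C)$ to $\tp(Bc/C)$ in the $d$-metric. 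Finally, the "moreover" clause: if $q = \lim\tp(a_n/B)$ exists then, the forward construction having produced the almost indiscernible sub-sequence from a sub-sequence satisfying $(*_B)$, the limit along that sub-sequence coincides with $q$ (limits along sub-sequences of a convergent sequence agree), so $q = p$ is stationary and the witnessing Morley sequence $(c_k)_k$ is Morley over $\Cb(q)$ as already established.

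The main obstacle I anticipate is the recursive coordination in the forward direction: turning the "one $a_n$ close to one $c$" statement supplied by $(*_B)$ into a single sub-sequence that is coordinatewise close to a fixed Morley sequence $(c_k)_k$, while respecting that $(*_B)$ gives $d$-convergence only over $C = \Cb(p)$ and not over the larger sets $Cc_0\cdots c_{k-1}$ that arise at each stage. Making this transfer precise — presumably via the stationarity of $p$ and a continuity/definability argument showing the approximation is uniform — is where the real work lies.
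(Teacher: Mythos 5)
Your plan correctly isolates the crux of the forward direction — coordinating the one-point approximation supplied by $(*_B)$ into a single sub-sequence simultaneously close to a Morley sequence — but it does not resolve it, and the workaround you sketch does not work. The obstruction is this: at stage $k$ you have committed to $c_0,\dots,c_{k-1}$ (Morley over $C$, with $d(c_j,a_{n_j})$ small) and you want $c_k\models p\rest_{Cc_{<k}}$ with $d(a_{n_k},c_k)$ small for some large $n_k$. What $(*_B)$ gives (restricting $\tp(Ba_n/C)\to^d\tp(Bc/C)$ to the finite sub-tuple $a_{n_{<k}}a_n$) is, by homogeneity, a tuple $\bar u'v'$ realising $\tp(a_{n_{<k}}c/C)$ with $\bar u'$ close to $a_{n_{<k}}$ and $v'$ close to $a_n$; but then $v'$ is a Morley continuation over $C\bar u'$, not over $Cc_{<k}$, and neither stationarity nor definability of $p$ lets you metrically relocate $v'$ to a realisation of $p\rest_{Cc_{<k}}$ while keeping it close to $a_n$ — independence is not a metrically continuous condition in the base, so ``transfer the approximation'' is precisely the missing step, not a routine one. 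The paper's device is different: it fixes neither the $a$'s nor the $c$'s, but builds $C$-conjugate copies $A^ic^i$ of the entire configuration $Ac$, allowing each copy to drift from the previous one by at most $2^{-i}$ on the coordinates already selected while the new point $c^{i+1}$ is chosen \emph{exactly} independent from $c^{\leq i}$ over $C$; the selected coordinates then form Cauchy sequences whose limits $b^i$ satisfy $(b^i)_i\equiv_C(a_{n_i})_i$ and $d(b^i,c^i)\to 0$, and pulling the genuine Morley sequence $(c^i)_i$ back through this $C$-equivalence produces the witness. Without some such drifting-conjugates argument your induction cannot close.

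A secondary point: in the converse, your claim that ``$\tp(Bb_n/C)$ stabilises'' is neither what superstability gives nor what is needed. The correct input is the metric finite character of forking: for each finite $\bar b\subseteq B$ and $\varepsilon>0$ there are $n$ and $\bar b'$ with $d(\bar b,\bar b')\leq\varepsilon$, $\bar b'\equiv_{\acl(Cc_{<n})}\bar b$ and $\bar b'\ind_{Cc_{<n}}c_n$, whence $\bar b'c_n\equiv_C\bar bc$ by transitivity and stationarity; this yields $\tp(Bc_n/C)\to^d\tp(Bc/C)$, and $d(a_n,c_n)\to 0$ finishes. Your treatment of the ``moreover'' clause is essentially that of the paper.
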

\begin{proof}
  We may assume that the sequence $(a_n)_n$ satisfies $(*_B)$, and therefore $(*)$.
  Let $A = \{a_n\}_n \subseteq B$ and let $p = \lim \tp(a_n/A)$, $C = \Cb(p)$, $c \vDash p$, so $c \ind_C A$.

  We construct by induction on $i \in \bN$ an increasing sequence $(n_i)_i$ and copies $A^ic^i$ of $Ac$, $A^i = \{a_n^i\}_n$, such that:
  \begin{enumerate}
  \item $d(a_{n_j}^i,a_{n_j}^{i+1}) \leq \frac{1}{2^i}$ for $j<i$.\vspace{1mm}
  \item $d(c^i,a_{n_i}^{i+1}) \leq \frac{1}{2^i}$.
  \item $A^ic^i \equiv_C Ac$.
  \item $c^i \ind_C c^{<i}$.
  \end{enumerate}
  We start with $A^0c^0 = Ac$.
  At the $i$th step we already have $A^i$, $c^i$, and $n_{<i}$.
  By $(*)$ there exists $k$ such that:
  \begin{gather*}
    d\bigl( \tp(a_{n_{< i}}a_k/C),\tp(a_{n_{<i}}c/C) \bigr) \leq 2^{-i}.
  \end{gather*}
  We let $n_i = k$, and we may assume that $n_i > n_j$ for $j < i$.
  Since $Ac \equiv_C A^ic^i$, there exists $A^{i+1} \vDash \tp(A/C)$ such that
  \begin{gather*}
    d(a_{n_{\leq i}}^{i+1},a_{n_{<i}}^ic^i) \leq 2^{-i}.
  \end{gather*}

  This takes care of the first two requirements.
  Choose $c^{i+1} \vDash p\rest_C$ such that $c^{i+1} \ind_C A^{i+1}c^{\leq i}$.
  Then the two last requirements are satisfied as well, and the construction may proceed.

  For each $i$, the sequence $(a_{n_i}^j)_j$ is Cauchy, converging to a limit $b^i$, and we have $d(c^i,b^i) \leq 2^{-i+2}$.
  Also, the sequence $(c^i)_i$ is indiscernible (being a Morley sequence in $p\rest_C$), and $(b^i)_i \equiv_C (a_{n_i})_i$.
  Thus $(a_n)$ admits an almost indiscernible sub-sequence $(a_{n_i})_i$.

  For the moreover part, we may again assume that the entire sequence $(a_n)_n$ satisfies $(*_B)$, since the limit type, if it exists, must be equal to the limit type of any sub-sequence.
  Then the statement follows from \autoref{lem:AILim}.

  For the converse we assume that $T$ is superstable, and we may further assume that $(a_n)_n$ is almost indiscernible as witnessed by an indiscernible sequence $(c_n)_n$.
  By \autoref{lem:AILim}, $p = \lim \tp(a_n/B)$ exists and is stationary, and $(c_n)_n$ is a Morley sequence over $C = \Cb(p)$.
  Let $c \vDash p$, so $c \ind_C B$.

  Fix a finite tuple $\bar b \subseteq B$, and $\varepsilon > 0$.
  By superstability there exists $n \in \bN$ such that $\bar b^\varepsilon \ind_{Cc_{<n}} c_n$.
  In other words, there exists $\bar b' \equiv_{\acl(Cc_{<n})} \bar b$ such that $d(\bar b,\bar b') \leq \varepsilon$ and $\bar b' \ind_{Cc_{<n}} c_n$.
  By transitivity, $\bar b' \ind_C c_n$, in which case $\bar b'c_n \equiv_C \bar bc$.
  This proves that $\tp(Bc_n/C)\to^d\tp(Bc/C)$.
  Since $d(a_n,c_n)\to 0$, it follows that $\tp(Ba_n/C)\to^d\tp(Bc/C)$ as desired.
\end{proof}

\begin{rmk}
  \label{rmk:PropertyStar}
  Assume that $(a_n)_n$ satisfies $(*_B)$, with $c$ and $C$ as in \autoref{dfn:PropertyStar}, and let $B \supseteq B' \supseteq (a_n)_n$.
  By the proof of \autoref{thm:GenBR}, there exists a Morley sequence $(c_k)_k$ in $\tp(c/C)$ which witnesses that a sub-sequence $(a_{n_k})_k$ is almost indiscernible.
  By \autoref{lem:AILim} it follows that $C \subseteq \dcl(\{a_n\}_n)$, and therefore $(*_{B'})$ holds (so in particular $(*_B) \Longrightarrow (*)$).

  In addition, the condition $(*_B)$ is equivalent to:
  \begin{quote}
    There exists a stationary type $p \in \tS(B)$ such that if $C = \Cb(p)$ and $c \vDash p$ then $\tp(Ba_n/C) \to^d \tp(Bc/C)$.
  \end{quote}
  Indeed, this already implies that $\tp(a_n/B) \to \tp(c/B)$.
\end{rmk}

\section{The model theoretic contents of Berkes \& Rosenthal \cite{Berkes-Rosenthal:AlmostExchangeableSequences}}
\label{sec:BR}

The main motivation for the present paper is to give a formal model-theoretic account for several results of Berkes \& Rosenthal \cite{Berkes-Rosenthal:AlmostExchangeableSequences}, which have a strong model-theoretic flavour to them.
In \autoref{sec:RandomVariables} (together with \autoref{thm:StrongConvergence}) we have already related some probability-theoretic notions with model-theoretic ones (most notably, the strong and weak topologies on distributions/types).
At this stage we have the necessary tools to address the main result (Theorem~2.4) of Berkes \& Rosenthal \cite{Berkes-Rosenthal:AlmostExchangeableSequences}.

A word of caution is in place, regarding the fact that Berkes \& Rosenthal consider $\bR$-valued random variables, whereas model theory can only deal with uniformly bounded random variables (or, more generally, families bounded in measure), and the literature treats $[0,1]$-valued ones.
In \autoref{sec:RandomVariables} only topological (and not, say, algebraic) properties of $\bR$ were actually used, we could simply compose with some fixed homeomorphism $\rho\colon \bR \rightarrow (0,1)$.
The same holds in what follows with one exception, namely exchangeability, which we shall treat explicitly in \autoref{lem:IndiscExchgSubSeq2}.

The following definitions were given in \cite{Berkes-Rosenthal:AlmostExchangeableSequences} for sequences of single random variables.
We give the obvious extensions to sequences of tuples of a fixed length.
\begin{dfn}
  \label{dfn:BRPaper}
  Let $(\bar X_n)_{n\in\bN}$ be a sequence of $m$-tuples of random variables, $\bar X_n = (X_{n,0},\ldots,X_{n,m-1})$.
  \begin{enumerate}
  \item
    Let $\sC \supseteq \sigma\bigl( \{X_{n,i}\}_{n,i} \bigr)$ be any probability algebra with respect to which all the $X_{n,i}$ are measurable.
    Then the sequence is \emph{determining in $\sC$} if the sequence $\dist(\bar X_n|\sC)$ converges weakly in $\fD_{\bR^m}(\sC)$.
    (We use an alternative characterisation from \cite[Proposition~2.1]{Berkes-Rosenthal:AlmostExchangeableSequences}.)
  \item
    Let $(\bar X_n)_n$ be a determining sequence of $\sC$-measurable random variables, and let $\vec \mu \in \fD_{\bR^m}(\sC)$ be the limit distribution.
    Then the \emph{limit tail algebra} \cite[p.~474]{Berkes-Rosenthal:AlmostExchangeableSequences} of $(\bar X_n)_n$ is $\sigma(\vec \mu) = \sigma\Bigl( \bigl\{ \vec \mu\bigl( \prod (-\infty,q_i) \bigr)  \bigr\}_{\bar q \in \bQ^m} \Bigr) \subseteq \sC$.
  \item
    The sequence is \emph{exchangeable} if the joint distribution (over the trivial algebra) of any $k$ distinct tuples of the sequence depends only on $k$.
  \item
    \label{item:BRPaperAlmostExchangeaable}
    The sequence is \emph{almost exchangeable} if there is an exchangeable sequence $(\bar Y_n)_{n\in\bN}$ such that $\sum_{n,i} |X_{n,i}-Y_{n,i}| < \infty$ almost surely.
  \end{enumerate}
\end{dfn}

While Berkes \& Rosenthal consider the ambient probability algebra as fixed (this is in particular apparent in their definition of a determining sequence), the model theoretic setting suggests that we allow it to vary.
Conveniently, this has no effect on the definitions:
\begin{fct}
  \label{fct:DeterminingSequence}
  A sequence $(\bar X_n)$ is determining in some $\sC \geq \sC_0 = \sigma\bigl( \{\bar X_n\}_n \bigr)$ if and only if it is determining in $\sC_0$.
  Therefore, from now we just say that a sequence is \emph{determining}.
\end{fct}
\begin{proof}
  Follows from the fact that if $\vec \mu_n$ are conditional distributions over $\sC_0$ and $\vec \mu$ a conditional distribution over $\sC \supseteq \sC_0$, then $\vec \mu_n \to \vec
  \mu$ weakly as conditional distributions over $\sC$ if and only if $\vec \mu$ is in fact over $\sC_0$ and $\vec \mu_n \to \vec \mu$ weakly as conditional distributions over $\sC_0$.
\end{proof}

By \autoref{thm:TypesAreCondDist}, $(\bar X_n)_n$ is determining (in $\sC$, say) if and only if the sequence $\bigl( \tp(\rho \bar X_n/\sC) \bigr)_n$ converges in $\tS_m(\sC)$ to some $\tp(\rho \bar Y/\sC)$, where $\bar Y$ is $\bR^m$-valued random variables (we recall that $\rho\colon \bR \rightarrow (0,1)$ is a homeomorphism fixed throughout).
On the other hand, if we only know that $\bigl( \tp(\rho \bar X_n/\sC) \bigr)_n$ converges in $\tS_m(\sC)$, say with limit $\tp(\bar Z/\sC)$, then $\bar Z$ consists of $[0,1]^m$-valued random variables, so $\bar Y = \rho^{-1} \bar Z$ consists of $[-\infty,\infty]^m$-valued random variables, which need not necessarily be $\bR^m$-valued.

\begin{lem}
  \label{lem:DetSeq}
  Let $(\bar X_n)_n$ be an $\bR^m$-valued sequence, and let $\sC \supseteq \sigma(\{\bar X_n\}_n)$.
  Then the sequence is determining if and only if:
  \begin{enumerate}
  \item The sequence $\bigl( \tp(\rho \bar X_n/\sC) \bigr)_n$ converges in $\tS_m(\sC)$; and:
  \item The sequence $(\bar X_n)_n$ is bounded in measure.
  \end{enumerate}
\end{lem}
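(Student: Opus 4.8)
The plan is to deduce this directly from the topological dictionary of \fref{thm:TypesAreCondDist} together with \fref{lem:TightConvergentSequence}, so that the only substantive input is the latter lemma.

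First I would fix once and for all a homeomorphism $\rho\colon[-\infty,\infty]\to[0,1]$ (so that $\rho$ carries $\bR$ homeomorphically onto $(0,1)\subseteq[0,1]$, and $\rho\bar X_n$ is a legitimate tuple in a model of $ARV$). Applying $\rho$ coordinatewise commutes with conditioning, so $\dist(\rho\bar X_n|\sC)=\rho_*\dist(\bar X_n|\sC)$. Under the identification $\zeta$ of \fref{thm:TypesAreCondDist}, the type $\tp(\rho\bar X_n/\sC)\in\tS_m(\sC)$ corresponds to $\rho_*\dist(\bar X_n|\sC)\in\fD_{[0,1]^m}(\sC)$, and $\zeta$ is a homeomorphism for the logic topology on types and the topology of weak convergence on distributions. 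Hence condition~(1), that $(\tp(\rho\bar X_n/\sC))_n$ converges in $\tS_m(\sC)$, is equivalent to the weak convergence of $(\rho_*\dist(\bar X_n|\sC))_n$ in $\fD_{[0,1]^m}(\sC)$.

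Next I would match condition~(2) with tightness in the sense used in \fref{lem:TightConvergentSequence}. By definition $(\bar X_n)_n$ is bounded in measure exactly when the family of its joint distributions is tight; and, as observed right after the definition of a determining sequence, neither being determining nor being tight depends on the choice of ambient algebra, since one passes between $\dist(\bar X_n|\sC)$ and the unconditional distribution by taking expectations, which leaves the quantities $\|\vec\mu([-R,R]^m)\|_1$ unchanged. So condition~(2) says precisely that $(\dist(\bar X_n|\sC))_n\subseteq\fD_{\bR^m}(\sC)$ is tight.

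Finally I would apply \fref{lem:TightConvergentSequence} with $\sA=\sC$, $\vec\mu_n=\dist(\bar X_n|\sC)$ and the homeomorphism $\rho$: it gives that $(\dist(\bar X_n|\sC))_n$ converges weakly in $\fD_{\bR^m}(\sC)$ --- which by \fref{thm:TypesAreCondDist} and the independence of the notion from $\sC$ is exactly the statement that $(\bar X_n)_n$ is determining --- if and only if the sequence is tight and $(\rho_*\dist(\bar X_n|\sC))_n$ converges weakly in $\fD_{[0,1]^m}(\sC)$, i.e., if and only if (2) and (1) both hold. This is the desired equivalence. The only real work is contained in \fref{lem:TightConvergentSequence} itself; the point to be careful about here is the bookkeeping that (1) alone produces a limit type realised by $[-\infty,\infty]^m$-valued random variables, so that (2) is genuinely needed to rule out escape of mass to $\pm\infty$, together with checking that "bounded in measure'' is literally the form of tightness that \fref{lem:TightConvergentSequence} consumes.
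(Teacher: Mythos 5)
Your proposal is correct and follows exactly the paper's route: the paper's own proof is simply ``Immediate from \fref{lem:TightConvergentSequence}'', and your write-up fills in precisely the bookkeeping (the identification of types with conditional distributions via \fref{thm:TypesAreCondDist} and of ``bounded in measure'' with tightness) that the paper leaves implicit.
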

\begin{proof}
  Immediate from \autoref{lem:TightConvergentSequence}.
\end{proof}

\begin{prp}[{\cite[Theorem~2.2]{Berkes-Rosenthal:AlmostExchangeableSequences}}]
  Every sequence of $\bR^m$-valued random variables which is bounded in measure has a determining sub-sequence.
\end{prp}
\begin{proof}
  Immediate from \autoref{cor:TightConvergentSubSequence}.
\end{proof}

Clearly, a sequence $(\bar X_n)_n$ is exchangeable if and only if it is an indiscernible set (or more precisely, if and only if the $(0,1)^m$-valued sequence $(\rho \bar X_n)_n$ is).
Since $ARV$ is a stable theory, every indiscernible sequence is indiscernible as a set, so exchangeable is synonymous with indiscernible.
This observation is part of the statement of \cite[Theorem~1.1]{Berkes-Rosenthal:AlmostExchangeableSequences}.
The full statement is that an indiscernible sequence of random variables is conditionally i.i.d.\ over its tail field.
This can also be obtained as an application to $ARV$ of the following facts:
\begin{enumerate}
\item In a stable theory, if $(\bar a_n)_n$ is any indiscernible sequence, then it is a Morley sequence over its ``tail closure'' $C = \bigcap_n \dcl^{eq}\bigl( \{\bar a_k\}_{k \geq n} \bigr)$ (follows from  \cite[Theorem~5.5]{BenYaacov:StableGroups}).
\item The characterisation of $\dcl(A)$ in $ARV$ as $L^1\bigl( \sigma(A), [0,1] \bigr)$.
\item In models of $ARV$, canonical bases exist in the real sort.
\end{enumerate}

On the other hand, being almost exchangeable is \emph{not} invariant under a homeomorphism of $\bR$ with $(0,1)$, so something needs to be said.
Recall first that inside a bounded family of random variables, convergence in $L^p$ is equivalent, for any $1 \leq p < \infty$, to convergence in measure.

\begin{lem}
  \label{lem:IndiscExchgSubSeq}
  Let $\rho\colon \bR \to (0,1)$ be any homeomorphism, and let $(X_n)$ and $(Y_n)$ be sequences of $\bR$-valued random variables.
  \begin{enumerate}
  \item
    If $\sum |X_n - Y_n| < \infty$ a.s.\ then $| \rho X_n - \rho Y_n| \to 0$ in $L^1$.
  \item
    Assume conversely that $| \rho X_n - \rho Y_n| \to 0$ in $L^1$, and that the sequence $(Y_n)_n$ is bounded in measure.
    Then there exists a sub-sequence for which $\sum |X_{n_k} - Y_{n_k}| < \infty$ a.s.
  \end{enumerate}
\end{lem}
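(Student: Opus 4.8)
The plan is to handle the two parts separately: the first is routine and needs no extra hypothesis, while the second uses boundedness in measure of $(Y_n)$ in an essential way. Throughout I would assume, without loss of generality, that $\rho$ is increasing --- a continuous injection $\bR\to(0,1)$ is strictly monotone, and passing from $\rho$ to $1-\rho$ leaves $|\rho X_n-\rho Y_n|$ unchanged --- so that $\rho$ extends to a continuous, hence uniformly continuous, map $\bar\rho\colon[-\infty,\infty]\to[0,1]$.

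For part (1): if $\sum_n|X_n-Y_n|<\infty$ a.s.\ then the terms of the series tend to $0$, i.e.\ $X_n-Y_n\to 0$ a.s.\ in the Euclidean sense. A short compactness argument upgrades this to $\bar\rho(X_n)-\bar\rho(Y_n)\to 0$ a.s.: along any subsequence pass to a sub-subsequence with $X_{n_k}\to x$ in the compact space $[-\infty,\infty]$; then $Y_{n_k}=X_{n_k}-(X_{n_k}-Y_{n_k})\to x$ as well, so $\bar\rho(X_{n_k}),\bar\rho(Y_{n_k})\to\bar\rho(x)$ by continuity. Since $|\rho X_n-\rho Y_n|\le 1$, bounded convergence then gives $|\rho X_n-\rho Y_n|\to 0$ in $L_1$.

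For part (2) the goal is to build a subsequence $(n_k)$ with $\sum_k E[|X_{n_k}-Y_{n_k}|\wedge 1]<\infty$; this is enough, since then $\sum_k(|X_{n_k}-Y_{n_k}|\wedge 1)<\infty$ a.s.\ by monotone convergence, and on that event the summands are eventually $<1$, whence $\sum_k|X_{n_k}-Y_{n_k}|<\infty$ a.s. The subsequence is produced in three stages. First, boundedness in measure of $(Y_n)$ provides radii $R_k$ with $P(|Y_n|>R_k)<2^{-k}$ for all $n$. Second, let $J_k=\rho([-R_k-1,R_k+1])$, a compact subinterval of $(0,1)$, let $\gamma_k>0$ be the distance from $\rho([-R_k,R_k])$ to the endpoints of $J_k$, use uniform continuity of $\rho^{-1}$ on $J_k$ to pick $\delta_k>0$ with $|\rho^{-1}(s)-\rho^{-1}(t)|\le 2^{-k}$ whenever $s,t\in J_k$ and $|s-t|\le\delta_k$, and set $\eta_k=\min(\delta_k,\gamma_k)$ and $c_k=\eta_k^{-1}+\gamma_k^{-1}$. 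Third, using $|\rho X_n-\rho Y_n|\to 0$ in $L_1$, choose $n_1<n_2<\cdots$ with $E|\rho X_{n_k}-\rho Y_{n_k}|\le 2^{-k}/c_k$. On the event $A_k=\{|Y_{n_k}|\le R_k\}\cap\{|\rho X_{n_k}-\rho Y_{n_k}|\le\eta_k\}$ we have $\rho Y_{n_k}\in\rho([-R_k,R_k])$ and, since $\eta_k\le\gamma_k$, also $\rho X_{n_k}\in J_k$, so $|X_{n_k}-Y_{n_k}|=|\rho^{-1}(\rho X_{n_k})-\rho^{-1}(\rho Y_{n_k})|\le 2^{-k}$; off $A_k$, Markov's inequality together with the tightness bound gives $P(A_k^c)\le 2^{-k}+\eta_k^{-1}E|\rho X_{n_k}-\rho Y_{n_k}|\le 2\cdot 2^{-k}$. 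Bounding the truncated variable by $2^{-k}$ on $A_k$ and by $1$ on its complement yields $E[|X_{n_k}-Y_{n_k}|\wedge 1]\le 2^{-k}+P(A_k^c)\le 3\cdot 2^{-k}$, which sums.

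The routine verifications --- the compactness step in part (1), checking that $A_k$ indeed forces $\rho X_{n_k}\in J_k$, and the precise Markov estimates --- I would leave essentially implicit. The one genuinely delicate point is the failure of uniform continuity of $\rho^{-1}$ near the endpoints $\rho(\pm\infty)$: a tiny $L_1$-gap $|\rho X_n-\rho Y_n|$ can conceal an arbitrarily large gap $|X_n-Y_n|$ when both variables are large, and it is exactly tightness of $(Y_n)$ --- pushing $\rho Y_{n_k}$, and hence $\rho X_{n_k}$, back into a compact subinterval of $(0,1)$ with overwhelming probability --- that rules this out. Getting the three choices in the right order (radii, then the continuity threshold, then a subsequence small enough to absorb the constants $c_k$) is what makes the final estimate close.
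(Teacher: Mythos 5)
Your proof is correct and follows essentially the same route as the paper's: part (1) rests on the boundedness of $|\rho X_n-\rho Y_n|$ together with a.s./in-measure convergence (you go via the compactification $[-\infty,\infty]$, the paper via uniform continuity of $\rho$ on $\bR$ --- same substance), and part (2) uses the identical key idea that tightness of $(Y_n)$ confines $\rho Y_n$, and hence $\rho X_n$, to a compact subinterval of $(0,1)$ on which $\rho^{-1}$ is uniformly continuous. The only cosmetic difference is that the paper first proves $|X_n-Y_n|\to 0$ in measure for the whole sequence and then extracts a subsequence with $\bP[|X_{n_k}-Y_{n_k}|>2^{-k}]<2^{-k}$ (Borel--Cantelli), whereas you fold the extraction into a single quantitative bound on $\bE[|X_{n_k}-Y_{n_k}|\wedge 1]$.
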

\begin{proof}
  For the first item, notice that $\rho$ is necessarily uniformly continuous.
  If $\sum |X_n - Y_n| < \infty$ a.s.\ then by standard arguments $|X_n - Y_n| \to 0$ in measure, in which case $|\rho X_n - \rho Y_n| \to 0$ in measure.
  Since $|\rho X_n - \rho Y_n|$ are bounded random variables, $\| \rho X_n - \rho Y_n \|_1 \to 0$.

  For the second item we assume that $| \rho X_n - \rho Y_n | \to 0$ in $L^1$, or equivalently, in measure, and that $(Y_n)_n$ is bounded in measure.
  We first claim that $|X_n - Y_n| \to 0$ in measure.
  Indeed, let $\varepsilon > 0$ and let $R \in \bR$ be such that $\bP[ |Y_n| > R ] < \varepsilon$ for all $n$.
  Let $K_0 = \rho\bigl[ [-R,R] \bigr]$, $K_1 = \rho\bigl[ [-R-\varepsilon,R+\varepsilon] \bigr]$.
  Then $K_0 \subseteq K_1^\circ \subseteq K_1 \subseteq (0,1)$, and since $K_1$ is compact, $\rho^{-1}$ is uniformly continuous on $K_1$.
  In particular, there exists $\delta > 0$ such that if $x,y \in K_1$ and $|x-y| < \delta$ then $|\rho^{-1}(x)-\rho^{-1}(y)| < \varepsilon$.
  Possibly taking a smaller $\delta$, we may assume that $K_1$ contains a $\delta$-neighbourhood of $K_0$.
  Since $| \rho X_n - \rho Y_n | \to 0$ in measure, for $n$ big enough we have $|\rho X_n - \rho Y_n| < \delta$ outside a set of probability $\varepsilon$.
  Thus, outside a set of probability $2\varepsilon$ we have both $|\rho X_n - \rho Y_n| < \delta$ and $\img (\rho Y_n) \subseteq K_0$, whereby $\img(\rho X_n) \subseteq K_1$ and therefore $|X_n - Y_n| < \varepsilon$.
  This concludes the proof that $|X_n - Y_n| \to 0$ in measure.
  It follows that for a sub-sequence, $\sum |X_{n_k} - Y_{n_k}| < \infty$ a.s.
\end{proof}

\begin{lem}
  \label{lem:IndiscExchgSubSeq2}
  Let $(\bar X_n)_n$ be a sequence of $\bR^m$-valued random variables.
  \begin{enumerate}
  \item
    Assume that $(\bar X_n)$ is almost exchangeable.
    Then it is bounded in measure.
  \item
    Assume that $(\bar X_n)$ is bounded in measure.
    Then it has an almost exchangeable sub-sequence if and only if the sequence $(\rho \bar X_n)_n$ has an almost indiscernible one.
    Moreover, in that case, the indiscernible sequence witnessing almost indiscernibility is $(0,1)^m$-valued.
  \end{enumerate}
\end{lem}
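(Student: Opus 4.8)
The plan is to reduce everything to the elementary probability facts recorded in \fref{lem:IndiscExchgSubSeq}, using repeatedly that an exchangeable sequence is identically distributed (take $k=1$ in the definition), hence bounded in measure since any single Borel probability measure on $\bR^m$ is tight, and that in the stable theory $ARV$ a sequence is exchangeable if and only if it is an indiscernible sequence. For item~(1), fix an exchangeable $(\bar Y_n)_n$ with $\sum_{n,i}|X_{n,i}-Y_{n,i}|<\infty$ a.s. Then $(\bar Y_n)_n$ is bounded in measure, and since the terms of an a.s.\ convergent series tend to $0$ we have $\max_i|X_{n,i}-Y_{n,i}|\to 0$ a.s., hence in measure. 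Given $\varepsilon>0$, pick $R_0$ with $\bP[\max_i|Y_{n,i}|>R_0]<\varepsilon/2$ for all $n$ and $N$ with $\bP[\max_i|X_{n,i}-Y_{n,i}|>1]<\varepsilon/2$ for $n\geq N$, so that $\bP[\max_i|X_{n,i}|>R_0+1]<\varepsilon$ for $n\geq N$; the finitely many remaining $\bar X_n$ are individually tight, and taking $R$ to be the maximum of $R_0+1$ and the radii needed for $n<N$ shows $(\bar X_n)_n$ is bounded in measure.

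For item~(2), suppose first that a sub-sequence $(\bar X_{n_k})_k$ is almost exchangeable, witnessed by an exchangeable $(\bar Y_k)_k$ with $\sum_{k,i}|X_{n_k,i}-Y_{k,i}|<\infty$ a.s. For every $j$, the joint distribution over the trivial algebra of $j$ distinct members of $(\rho\bar Y_k)_k$ is the image, under the coordinatewise application of $\rho$, of that of the corresponding members of $(\bar Y_k)_k$, hence depends only on $j$; thus $(\rho\bar Y_k)_k$ is exchangeable, so an indiscernible sequence, and it is $(0,1)^m$-valued. Applying \fref{lem:IndiscExchgSubSeq}(1) coordinatewise gives $d(\rho\bar X_{n_k},\rho\bar Y_k)\to 0$, so $(\rho\bar X_{n_k})_k$ is almost indiscernible with the $(0,1)^m$-valued witness $(\rho\bar Y_k)_k$.

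Conversely, assume $(\bar X_n)_n$ is bounded in measure and that a sub-sequence $(\rho\bar X_{n_k})_k$ is almost indiscernible, witnessed by an \emph{a priori} $[0,1]^m$-valued indiscernible sequence $(\bar Z_k)_k$ with $d(\rho\bar X_{n_k},\bar Z_k)\to 0$. The crux is to show that the $\bar Z_k$ are actually $(0,1)^m$-valued, and this is the only step requiring boundedness in measure. The $\bar Z_k$ are identically distributed and $(\bar X_{n_k})_k$ is bounded in measure, so given $\varepsilon>0$ choose $R$ with $\bP[\max_i|X_{n_k,i}|>R]<\varepsilon$ for all $k$; then $\rho([-R,R])$ is a compact subset of $(0,1)$, hence contained in some interval $[a,b]$ with $0<a\leq b<1$, and $\bP[\rho X_{n_k,i}\in[a,b]]>1-\varepsilon$ for all $k$. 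Fixing $0<\delta<\min(a,1-b)$, the convergence $\|\rho X_{n_k,i}-Z_{k,i}\|_1\to 0$ (which implies convergence in measure) yields $\bP[Z_{k,i}\in[a-\delta,b+\delta]]>1-2\varepsilon$ for large $k$, hence for all $k$ since the $Z_{k,i}$ share a common law; as $0,1\notin[a-\delta,b+\delta]$ we get $\bP[Z_{k,i}\in\{0,1\}]\leq 2\varepsilon$, and letting $\varepsilon\to 0$ shows $(\bar Z_k)_k$ is $(0,1)^m$-valued. Now $\bar Y_k:=\rho^{-1}\bar Z_k$ is a well-defined $\bR^m$-valued sequence which is exchangeable (a coordinatewise image measure of an exchangeable sequence), hence bounded in measure, and $\|\rho X_{n_k,i}-\rho Y_{k,i}\|_1\to 0$; applying \fref{lem:IndiscExchgSubSeq}(2) successively to the $m$ coordinates produces a sub-sequence $(k_j)_j$ along which $\sum_j|X_{n_{k_j},i}-Y_{k_j,i}|<\infty$ a.s.\ for each $i$, and hence $\sum_{j,i}|X_{n_{k_j},i}-Y_{k_j,i}|<\infty$ a.s. Since $(\bar Y_{k_j})_j$ is exchangeable, $(\bar X_{n_{k_j}})_j$ is an almost exchangeable sub-sequence of $(\bar X_n)_n$, as required. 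I expect the decompactification step — deducing from boundedness in measure that the $[0,1]^m$-valued indiscernible witness avoids the endpoints $0$ and $1$, so that it can be lifted along $\rho^{-1}$ — to be the main obstacle; the rest is bookkeeping with \fref{lem:IndiscExchgSubSeq}.
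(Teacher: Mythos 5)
Your proof is correct. Item (1) and the left-to-right direction of item (2) follow the paper's argument essentially verbatim (exchangeable $\Rightarrow$ identically distributed $\Rightarrow$ tight; push the witness forward along $\rho$ and invoke \fref{lem:IndiscExchgSubSeq}(1)). Where you genuinely diverge is the key step of the converse: showing that the \emph{a priori} $[0,1]^m$-valued indiscernible witness $(\bar Z_k)_k$ actually avoids the endpoints, so that it can be pulled back along $\rho^{-1}$ to an $\bR^m$-valued exchangeable sequence. The paper does this model-theoretically: it passes to the limit type $p = \lim \tp(\rho\bar X_n/\sC)$, invokes \fref{lem:DetSeq} (hence, ultimately, \fref{lem:TightConvergentSequence}) to conclude that the sequence is determining and that a realisation $\rho\bar Y \models p$ is $(0,1)^m$-valued, and then uses $\bar Y_n \equiv_\emptyset \bar Y$ to transfer this to each term of the indiscernible sequence. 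You instead argue directly with measures: tightness of $(\bar X_{n_k})$ confines $\rho\bar X_{n_k,i}$ to a compact subinterval of $(0,1)$ up to probability $\varepsilon$, convergence in measure transfers this (slightly enlarged) to $Z_{k,i}$ for large $k$, and identical distribution of the $Z_{k,i}$ propagates it to all $k$, giving $\bP[Z_{k,i}\in\{0,1\}]=0$. Both arguments are sound and of comparable length; yours is more elementary and self-contained (it bypasses the limit-type and determining-sequence machinery entirely), while the paper's version has the side benefit of exhibiting the limit distribution explicitly, which is reused in the surrounding translation of Berkes--Rosenthal. Your iterated coordinatewise application of \fref{lem:IndiscExchgSubSeq}(2) to extract the final sub-sequence is also fine, and matches what the paper leaves implicit.
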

\begin{proof}
  For the first item, it is clear that an exchangeable (and more generally, an identically distributed) sequence is bounded in measure.
  Assume now that $(\bar Y_n)_n$ witnesses that $(\bar X_n)_n$ is almost exchangeable.
  As in the proof of \autoref{lem:IndiscExchgSubSeq} we have $\bar X_n - \bar Y_n \to 0$ in measure, and the statement follows.

  We now prove the second item.
  For left to right, we may assume that $(\bar X_n)_n$ is almost exchangeable, as witnessed by $(\bar Y_n)_n$.
  By \autoref{lem:IndiscExchgSubSeq} the sequence $(\rho \bar Y_n)_n$ witnesses that $(\rho \bar X_n)_n$ is almost indiscernible.

  For right to left, we may assume that $(\rho \bar X_n)_n$ is almost indiscernible, as witnessed by an indiscernible sequence $(\rho \bar Y_n)_n$ (where the $Y_{n,i}$ are, \textit{a priori}, $[-\infty,\infty]$-valued).
  Let $\sC = \sigma\bigl( \{\bar X_n\}_n \bigr)$.
  Then the limit $p = \lim \tp(\rho \bar Y_n/\sC)$ exists, whereby the limit $\lim \tp(\rho \bar X_n/\sC) = p$ exists as well.
  Let $\rho \bar Y \vDash p$.
  By \autoref{lem:DetSeq} $(\bar X_n)_n$ is determining and $\bar Y$ is $\bR^m$-valued.
  Since $\bar Y_n \equiv \bar Y$ (over $\emptyset$, even though not necessarily over $\sC$), each $\bar Y_n$ is $\bR$-valued as well.
  Now, again by \autoref{lem:IndiscExchgSubSeq}, there exist sub-sequences $(\bar X_{n_k})_k$,  $(\bar Y_{n_k})_k$ such that $\sum |X_{n_k,i}-Y_{n_k,i}| < \infty$ a.s.
\end{proof}

Finally, a word regarding the limit tail algebra of a determining sequence.
Let $M = L^1(\sF,[0,1])$ be a big saturated model of $ARV$, $\sC \subseteq \sF$ a sub-algebra, and let $(\bar X_n)_n$ be a determining sequence of $\sC$-measurable random variables.
Let $\bar Y$ realise the limit distribution over $\sC$, measurable in $\sF$ (although not necessarily in $\sC$).
Then the tail measure algebra of $(\bar X_n)_n$ is precisely $\sA
= \sigma\bigl( \{ \bE[(\rho\bar Y)^\alpha|\sC] \}_{\alpha\in\bN^m} \bigr)
\subseteq \sC$, which is interdefinable with $\Cb(\rho\bar Y/\sC)$.

Now, the Main Theorem of \cite{Berkes-Rosenthal:AlmostExchangeableSequences} follows as a special case of our \autoref{thm:GenBR}.

\begin{thm}[{\cite[Main Theorem (2.4)]{Berkes-Rosenthal:AlmostExchangeableSequences}}]
  Let $(\bar X_n)_n$ be a sequence of random variables in a probability space $(\Omega,\sC,\mu)$.
  Then $(\bar X_n)_n$ has an almost exchangeable sub-sequence if and only if it has a determining sub-sequence whose conditional distributions (with respect to the limit tail algebra of the sequence), relative to every set of positive measure, converge strongly.

  Moreover, if in addition $(\bar X_n)$ is determining then the sequence witnessing almost exchangeability is i.i.d.\ over the limit tail algebra.
\end{thm}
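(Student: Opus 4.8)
The plan is to run the translation between $ARV$ and conditional distributions developed above and reduce the statement to \fref{thm:GenBR}, applied to the theory $ARV$, which is $\aleph_0$-stable (hence superstable) and SFB. Throughout, fix a homeomorphism $\rho\colon \bR \to (0,1)$ as above, put $a_n = \rho\bar X_n$, and work inside a large saturated model $M = L_1(\sF,[0,1]) \models ARV$ with $\sC \subseteq \sF$, taking $\sC$ itself as the set of parameters, so that the relevant instance of the condition $(*)$ is $(*_{\sC})$. The lemma above comparing almost exchangeability of $(\bar X_n)_n$ with almost indiscernibility of $(\rho\bar X_n)_n$ then shows that $(\bar X_n)_n$ has an almost exchangeable sub-sequence if and only if $(a_n)_n$ has an almost indiscernible one: for the forward direction one first passes to the almost exchangeable sub-sequence, which is automatically bounded in measure, and for the backward direction the sub-sequence produced is determining, hence bounded in measure by \fref{lem:DetSeq}, so its limit type is realised by an $\bR^m$-valued tuple and \fref{lem:IndiscExchgSubSeq} applies.

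The heart of the argument is to recognise Berkes and Rosenthal's condition as an instance of $(*_{\sC})$. Given a sub-sequence $(a_{n_k})_k$, by \fref{thm:TypesAreCondDist} and \fref{lem:DetSeq} the sequence $(\bar X_{n_k})_k$ is determining exactly when $p = \lim \tp(a_{n_k}/\sC)$ exists, in which case, being bounded in measure, $p$ is realised by some $\bR^m$-valued $\bar Y$; and as recorded above $C := \Cb(p)$ is interdefinable with the limit tail algebra $\sA$ of $(\bar X_{n_k})_k$. The condition $(*_{\sC})$ then asserts, for $c \models p$, that $\tp(\sC a_{n_k}/C) \to^d \tp(\sC c/C)$. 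Since $ARV$ is SFB, \fref{thm:StrongConvergence} identifies $d$-convergence with strong convergence of conditional distributions; unwinding this for the infinite parameter tuple enumerating $\sC$ (coordinate by coordinate, on the tuples $(\mathbbm{1}_E,a_{n_k})$ for $E \in \sC$), it says precisely that the $L_1(C)$-valued measures $\bP[\{a_{n_k}\in\cdot\}\cap E\mid C]$ converge strongly, which after normalising by $\bP[E\mid C]$ on sets of positive measure is exactly the statement that the conditional distributions of $(\bar X_{n_k})_k$ with respect to the limit tail algebra, relative to every set of positive measure, converge strongly.

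With this translation in hand, the equivalence follows from \fref{thm:GenBR}, both directions of which are available because $ARV$ is superstable: a determining sub-sequence of $(\bar X_n)_n$ with the stated strong-convergence property yields a sub-sequence of $(a_n)_n$ satisfying $(*_{\sC})$, hence an almost indiscernible sub-sequence, hence an almost exchangeable sub-sequence of $(\bar X_n)_n$; conversely, an almost exchangeable sub-sequence of $(\bar X_n)_n$ gives an almost indiscernible sub-sequence of $(a_n)_n$, which by the superstable converse of \fref{thm:GenBR} has a sub-sequence satisfying $(*_{\sC})$, and translating back gives the required determining sub-sequence. For the moreover part, if $(\bar X_n)_n$ is itself determining then $q = \lim \tp(a_n/\sC)$ exists and coincides with the limit type of every sub-sequence, so by the moreover clause of \fref{thm:GenBR} the type $q$ is stationary and the indiscernible sequence witnessing almost indiscernibility is Morley over $\Cb(q)$; since in $ARV$ a Morley sequence over a parameter set is conditionally i.i.d.\ over the algebra it generates, and $\Cb(q)$ is interdefinable with the limit tail algebra, applying $\rho^{-1}$ shows that the exchangeable sequence witnessing almost exchangeability is i.i.d.\ over the limit tail algebra.

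The one step that I expect to need genuine care is the ``unwinding'' in the middle paragraph: one must check that strong ($L_1$) convergence of the joint conditional distributions of the pairs $(\mathbbm{1}_E,a_{n_k})$ over $C$ is equivalent, set of positive measure by set of positive measure, to strong convergence of the corresponding distributions conditioned on $E$, and keep track of the passage between $\bR^m$- and $(0,1)^m$-valued random variables through $\rho$ --- it is precisely tightness (boundedness in measure) that keeps the limit $\bR^m$-valued, which is why that hypothesis reappears here. Every other ingredient is a direct appeal to results already proved above.
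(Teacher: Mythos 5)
Your proposal is correct and follows essentially the same route as the paper: translate via $\rho$ to $[0,1]$-valued variables, identify ``determining'' with convergence of $\tp(\rho\bar X_{n_k}/\sC)$, the limit tail algebra with $\Cb(p)$, and the strong-convergence condition relative to sets of positive measure with $d$-convergence (equivalently $\Cb$-convergence, by SFB) of $\tp(\rho\bar X_{n_k},\sC/\sA)$, then invoke \fref{thm:GenBR} in both directions, with the moreover clause handling the i.i.d.\ statement. You actually spell out the ``unwinding'' step relating joint conditional distributions of $(\mathbbm{1}_E,\rho\bar X_{n_k})$ to distributions conditioned on $E$ in more detail than the paper, which simply asserts that equivalence.
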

\begin{proof}
  We may view $\sC$ as a sub-algebra of a rich atomless probability algebra $\sF$ and work in $M = L^1(\sF,[0,1]) \vDash ARV$.
  Since almost exchangeable and determining sequences are bounded in measure, we may assume that $(\bar X_n)_n$ is bounded in measure.

  Under this assumption, the first condition is equivalent to saying that $(\rho\bar X_n)_n$ admits an almost indiscernible sub-sequence.
  Regarding the second condition, a sub-sequence $(\bar X_{n_k})_k$ is determining if and only if $\bigl( \tp(\bar X_{n_k}/\sC) \bigr)_k$ converge to some type $p \in \tS_m(\sC)$.
  In this case the limit tail algebra $\sA$ is interdefinable with $\Cb(p)$, and the conditional distributions $\dist(\bar X_{n_k},S|\sA)$ converge strongly for every $S \in \sC$ if and only if $\tp( \rho\bar X_{n_k}, \sC/\sA)$ converge in $(\tS(\sA),\sT_\Cb)$, or equivalently, in $(\tS(\sA),\sT_d)$.

  Thus the statement of the theorem is equivalent to saying that the sequence $(\rho\bar X_n)_n$ has an almost indiscernible sequence if and only if it has a sub-sequence with the property $(*_\sC)$.
  This is just a special case of \autoref{thm:GenBR} (and the same for the moreover part).
\end{proof}

\begin{cor}[{\cite[Theorem~3.1]{Berkes-Rosenthal:AlmostExchangeableSequences}}]
  A sequence of random variables has an almost i.i.d.\ sub-sequence if and only if it has a sub-sequence whose distributions relative to any set of positive measure converge to the same limit.
\end{cor}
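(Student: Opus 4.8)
The plan is to obtain this as the special case of the theorem above (the model-theoretic reading of \cite[Main Theorem~(2.4)]{Berkes-Rosenthal:AlmostExchangeableSequences}) in which the limit tail algebra is trivial. First I would reduce to the case that $(\bar X_n)_n$ is bounded in measure: both sides of the claimed equivalence pass to sub-sequences, and each forces some sub-sequence to be bounded in measure — an almost i.i.d.\ sequence is almost exchangeable, hence bounded in measure by the lemma above, while if the distributions of $(\bar X_{n_k})_k$ relative to every positive-measure set converge to a common limit then, taking the set to be all of $\Omega$, the sequence $\bigl(\dist(\bar X_{n_k})\bigr)_k$ of Borel probability measures on $\bR^m$ converges weakly and is therefore tight. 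So we assume $(\bar X_n)_n$ is bounded in measure; write $\sC$ for the ambient probability algebra and work inside $M = L^1(\sF,[0,1]) \models ARV$ with $\sC \subseteq \sF$.

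Next I would observe that, for a bounded-in-measure sequence, having an almost i.i.d.\ sub-sequence is equivalent to having an almost exchangeable sub-sequence with \emph{trivial} limit tail algebra. Indeed, an almost exchangeable sub-sequence $(\bar X_{n_k})_k$, witnessed by an indiscernible sequence $(\bar Y_{n_k})_k$, has a limit type $p = \lim \tp(\rho\bar X_{n_k}/\sC)$ by \fref{lem:AILim} (taking $B = \sC$, which contains the sub-sequence) and is bounded in measure, hence is determining by \fref{lem:DetSeq}, and its limit tail algebra is interdefinable with $\Cb(p)$; moreover, by the moreover part of \fref{lem:AILim} the witness $(\bar Y_{n_k})_k$ is a Morley sequence over $\Cb(p)$, so when $\Cb(p)$ is trivial it is a Morley sequence over $\emptyset$, i.e.\ i.i.d., and $(\bar X_{n_k})_k$ is almost i.i.d. Conversely, an i.i.d.\ witness $(\bar Y_{n_k})_k$ is a Morley sequence over $\acl^{eq}(\emptyset)$, so the canonical base of its average type lies in $\acl^{eq}(\emptyset)$; since that average type restricts to $p$ and canonical bases of nonforking extensions agree, $\Cb(p) \subseteq \acl^{eq}(\emptyset)$, so the limit tail algebra is trivial.

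Finally I would translate ``trivial limit tail algebra'' into the stated condition. A determining sub-sequence $(\bar X_{n_k})_k$ with limit distribution $\vec\mu \in \fD_{\bR^m}(\sC)$ has trivial limit tail algebra $\sigma(\vec\mu)$ precisely when $\vec\mu$ is a.s.\ a deterministic Borel measure $\vec\nu$ on $\bR^m$, and this holds if and only if the distributions of $\bar X_{n_k}$ relative to every positive-measure $S$ (so $S \in \sC$) converge weakly to $\vec\nu$: for continuous $\theta\colon \bR^m \to \bR$ one has $\mu(S)\int\theta\,d\,\dist(\bar X_{n_k}|S) = \bE[\mathbbm{1}_S\,\theta(\bar X_{n_k})] = \bE[\mathbbm{1}_S\int\theta\,d\vec\mu_{n_k}] \to \bE[\mathbbm{1}_S\int\theta\,d\vec\mu]$, where $\vec\mu_{n_k} = \dist(\bar X_{n_k}|\sC)$ and the convergence is the weak $L^1(\sC)$-convergence granted by ``determining'', tested against $\mathbbm{1}_S$; the right-hand side equals $\mu(S)\int\theta\,d\vec\nu$ exactly when $\vec\mu = \vec\nu$ a.s., and running this for all $S \in \sC$ gives the converse. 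The ``converge strongly'' clause of the Main Theorem needs no separate mention here, since over the trivial algebra weak and strong convergence of distributions coincide. Combining the three steps with the theorem above yields the corollary. The step I expect to demand the most care is this last translation — keeping straight the distinction between weak and strong $L^1(\sC)$-convergence and the roles of the nested algebras $\sigma(\vec\mu) \subseteq \sigma(\{\bar X_n\}_n) \subseteq \sC$ — everything else being a direct reading-off from the Main Theorem and from \fref{lem:AILim} and \fref{lem:DetSeq}.
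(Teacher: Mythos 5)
Your proposal is correct and follows essentially the same route as the paper: both deduce the corollary from the Main Theorem specialised to a trivial limit tail algebra, using that triviality of $\Cb(p)$ corresponds to the limit distribution being independent of $\sC$ (hence to all relative distributions sharing one limit), that weak and strong convergence coincide over the trivial algebra, and that an i.i.d.\ witness is exactly a Morley sequence over $\emptyset$. The only divergence is organisational — you derive the ``almost i.i.d.\ iff almost exchangeable with trivial tail algebra'' equivalence directly from \fref{lem:AILim}, where the paper argues the forward direction via $\Cb(p) \subseteq \dcl(\emptyset)$ and $\sC \ind \bar Z$ and gets the i.i.d.\ witness from the moreover clause — which does not change the substance.
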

\begin{proof}
  Let $(\bar X_n)_n$ be the sequence, and let $\sC$ denote the ambient probability algebra in the statement, and we may embed $\sC$ in a model $M \vDash ARV$.

  Following the same translation as above, if $(\bar X_n)_n$ is almost i.i.d., say as witnessed by $(\bar Y_n)_n$, then $\lim \tp\bigl( \rho\bar X_n/\sC) = \lim \tp\bigl( \rho\bar Y_n/\sC) = p$, say, and $\Cb(p) \subseteq \dcl(\emptyset)$.
  In other words, if $\rho\bar Z \vDash p$ then $\sC \ind \bar Z$, meaning precisely that the distribution of $\bar Z$ relative to any non zero member of $\sC$ is the same.

  Conversely, assume that $\lim \dist (\bar X_n|S) = \lim \dist(\bar X_n) = \mu$, say, for every $0 \neq S \in \sC$.
  Let $\bar Z$ realise $\mu$ independently of $\sC$.
  Then $\dist(\bar X_n|\sC) \to \dist(\bar Z|\sC)$ weakly, so the sequence is determining, and since $\bar Z \ind \sC$ the limit tail algebra is trivial.
  Also, since $\dist(\bar X_n|\sC) \to \dist(\bar Z|\sC)$ weakly, we have $\dist(\bar X_n,\sC) \to \dist(\bar Z,\sC)$ (weakly or strongly, over the trivial algebra it is the same thing), so passing to a sub-sequence we may assume that $(\bar X_n)_n$ is almost exchangeable, say witnessed by $(\bar Y_n)_n$.
  By the moreover part of the theorem, this sequence is i.i.d.
\end{proof}

\bibliographystyle{begnac}
\bibliography{begnac}

\end{document}